\DeclareMathOperator*{\supp}{supp}
\DeclareMathOperator*{\sgn}{sgn}
\DeclareMathOperator*{\diam}{diam}
\newtheorem{theorem}{Theorem}[section]
\newtheorem{definition}[theorem]{Definition}
\newtheorem*{definition*}{Definition}
\newtheorem*{remark*}{Remark}
\newtheorem*{problem*}{Problem}
\newtheorem{lemma}[theorem]{Lemma}
\newtheorem{proposition}[theorem]{Proposition}
\newtheorem{question}{Question}
\newtheorem{problem}{Problem}
\newtheorem{corollary}[theorem]{Corollary}
\newtheorem{remark}[theorem]{Remark}
\begin{document}
\title[Frames of translates in~ $L_p(\mathbb{R}^d)$]{Unconditional Schauder frames of translates in~ $L_p(\mathbb{R}^d)$}
\date{}
\author{Miguel Berasategui \and Daniel Carando}

\address{Departamento de Matem\'{a}tica - Pab I,
	Facultad de Cs. Exactas y Naturales, Universidad de Buenos Aires,
	(1428) Buenos Aires, Argentina, and CONICET-IMAS}

\thanks{This work was partially supported by CONICET-PIP 11220130100329CO, ANPCyT PICT 2015-2299}

\subjclass[2010]{42C15, 46E30, 46B15}

\maketitle

\begin{abstract}
We show that, for $1<p \le 2$, the space $L_p(\mathbb{R}^d)$ does not admit unconditional Schauder frames $\left\lbrace f_i,f_i'\right\rbrace_{i\in\mathbb{N}}$ where $\left\lbrace f_i\right\rbrace$ is a sequence of translates of finitely many functions and $\left\lbrace f_i'\right\rbrace$ is seminormalized. In fact, the only subspaces of $L_p(\mathbb{R}^d)$ admitting such Banach frames are those isomorphic to $\ell_p$.  On the other hand, if $2<p<+\infty$ and $\left\lbrace \lambda_{i}\right\rbrace_{i\in\mathbb{N}}\subseteq \mathbb{R}^d$ is an unbounded sequence, there is a subsequence $\left\lbrace \lambda_{m_i}\right\rbrace_{i\in\mathbb{N}}$, a function $f\in L_p(\mathbb{R}^d)$, and a seminormalized sequence of bounded functionals $\left\lbrace f_i'\right\rbrace_{i\in\mathbb{N}}$ such that $\left\lbrace T_{\lambda_{m_i}}f,f_i'\right\rbrace_{i\in\mathbb{N}}$ is an unconditional Schauder frame for $L_p(\mathbb{R}^d)$.
\end{abstract}

\section{Introduction.}\label{sec-intro} We study Schauder frames of translates in subspaces of $L_p(\mathbb R^d).$ For $\lambda\in \mathbb{R}^d$, the \emph{translation operator} $T_{\lambda}$ is defined by
$$
\left(T_{\lambda}f\right)(x)=f(x-\lambda),
$$
where $f$ is a function defined in $\mathbb R^d$. Given a sequence $\{\lambda_i\}_i\subset \mathbb R^d$ and a function $f\in L_p(\mathbb R^d)$, the closed linear span of the sequence of translates $$X=\overline{\left[T_{\lambda_i} f:i\in\mathbb{N}\right]}\subset L_p(\mathbb R^d)$$ has been studied by many authors (see, for example,  \cite{Atzmon1996, Bruna2006, Christensen1995,Christensen1999,Olevskii2018,Olson1992,Wiener1933}, where techniques from harmonic analysis are the main tools, and \cite{Freeman2014, Odell2011}, where Banach space techniques are essential). Systems formed by translates of finitely many functions were also studied (see, for example,  \cite{Liu2012}).

One might classify many of the different questions which are behind this research into two families. The first one focuses on $X$ itself: does it coincide with $L_p(\mathbb R^d)$? If not, what can we say about $X$? (do we have an isomorphic description? is it complemented in $L_p(\mathbb R^d)$? The second family of questions focuses on the sequence $\{T_{\lambda_i} f\}_i$: is it a Schauder basis of $X$? or a Schauder frame? What about unconditionality? However, the (maybe) most natural question belongs to the intersection of these families: does $L_p(\mathbb R^d)$ admits (unconditional) Schauder bases (or frames) formed by translates? In other words, can we have both $X=L_p(\mathbb R^d)$ and $\{T_{\lambda_i} f\}_i$ a Schauder basis/frame? We recall some results in this direction that motivated our research.
\begin{theorem}\cite[Theorem 2]{Olson1992}
If $\{ T_{\lambda_i} f \}_{i\in \mathbb{N}}\subseteq L_2(\mathbb{R})$ is an unconditional basic sequence, then $\overline{\left[ T_{\lambda_i}f :i\in\mathbb{N}\right]}\ne L_2(\mathbb{R})$.
\end{theorem}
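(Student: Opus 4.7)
The natural strategy is a proof by contradiction. Assume that $\{T_{\lambda_i}f\}_{i\in\mathbb{N}}$ is an unconditional basic sequence whose closed linear span equals $L_2(\mathbb{R})$. Since translation is an $L_2$-isometry, $\|T_{\lambda_i}f\|_2=\|f\|_2$ for every $i$, so the sequence is seminormalized. The classical fact that every seminormalized unconditional basic sequence in a Hilbert space is equivalent to the canonical basis of $\ell_2$, combined with the spanning hypothesis, upgrades the situation to: $\{T_{\lambda_i}f\}$ is a Riesz basis of $L_2(\mathbb{R})$.

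Next I would transfer the problem to the Fourier side. By Plancherel and the identity $\widehat{T_\lambda f}(\xi)=e^{-2\pi i\lambda\xi}\hat f(\xi)$, the Riesz basis inequalities become
\[
A\sum_i|c_i|^2 \;\le\; \int_{\mathbb{R}}\Big|\sum_i c_i\, e^{-2\pi i\lambda_i\xi}\Big|^2|\hat f(\xi)|^2\,d\xi \;\le\; B\sum_i|c_i|^2.
\]
If $\hat f$ vanished on a set of positive measure $E$, every element in the Fourier image of the closed span would vanish on $E$, contradicting surjectivity; hence $\hat f\neq 0$ a.e. Multiplication by $\hat f$ is then a surjective isometry from $L_2(|\hat f|^2\,d\xi)$ onto $L_2(\mathbb{R})$, so $\{e^{-2\pi i\lambda_i\xi}\}_i$ must be a Riesz basis of $L_2(\mu)$, where $\mu=|\hat f|^2\,d\xi$ is a finite, absolutely continuous measure on all of $\mathbb{R}$.

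The contradiction should come from the impossibility of such a Riesz basis of exponentials on a finite measure spread over all of $\mathbb{R}$. A useful concrete observation is that the Gramian of $\{T_{\lambda_i}f\}$ has entries $(f*\tilde f)(\lambda_j-\lambda_i)$ with $\tilde f(x):=\overline{f(-x)}$; since $\widehat{f*\tilde f}=|\hat f|^2\in L_1$, these entries belong to $C_0(\mathbb{R})$. The Riesz basis assumption makes the Gramian bounded and boundedly invertible on $\ell_2$, which on the one hand forces $\{\lambda_i\}$ to be uniformly separated, and on the other hand, when the lower inequality is tested against narrow bumps concentrated on level sets $\{|\hat f|\ge\varepsilon\}$, yields a uniform positive lower density for $\{\lambda_i\}$. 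Because $\hat f\neq 0$ a.e., such a density would have to be realized throughout $\mathbb{R}$, which is incompatible with $\mu(\mathbb{R})=\|f\|_2^2<\infty$.

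I expect the last step to be the main obstacle: extracting a quantitative Beurling-type lower density on $\{\lambda_i\}$ from the Riesz basis property for exponentials in a weighted $L_2$ over the whole real line, and turning it into a genuine contradiction with the integrability of $|\hat f|^2$. In the lattice case this is short and classical via the periodization $\sum_k|\hat f(\xi+k)|^2$, but for arbitrary $\{\lambda_i\}$ one seems to need a true sampling-theoretic input, and this is the technical core of the argument.
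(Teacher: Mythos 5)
Your reduction is sound as far as it goes: the passage from ``seminormalized unconditional basic sequence spanning $L_2$'' to ``Riesz basis'' is correct (Rademacher averaging in a Hilbert space), the observation that completeness forces $\hat f\neq 0$ a.e.\ is correct, and the unitary equivalence with the system $\{e^{-2\pi i\lambda_i\xi}\}_i$ in $L_2(\mu)$, $d\mu=|\hat f|^2\,d\xi$, is a legitimate reformulation. But the proof stops exactly where the theorem actually lives. The claim that testing the lower Riesz inequality ``against narrow bumps concentrated on level sets'' yields a uniform positive Beurling lower density for $\{\lambda_i\}$, incompatible with $\mu(\mathbb{R})<\infty$, is not a deduction — it is a restatement of the difficulty. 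Landau-type necessary density conditions govern exponential frames and Riesz sequences for Paley--Wiener spaces, i.e.\ $L_2$ of a set of \emph{finite Lebesgue measure}; here the measure is a finite, a.e.\ positive weight on all of $\mathbb{R}$, which is the opposite regime, and no off-the-shelf density theorem applies. Since you yourself flag this step as the unresolved ``technical core,'' the proposal has a genuine gap: the contradiction is never reached.

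Note that the statement is quoted from Olson--Zalik and not reproved in the paper, but the paper's own machinery closes your gap by a completely different and more elementary route, with no Fourier analysis. A Riesz basis of translates has seminormalized coordinate functionals, so $\{\lambda_i\}$ is uniformly separated (this is \cite[Theorem 1]{Olson1992}, or Corollary~\ref{propositionmarcoincondicionalcud} here); separation gives $\sum_i\bigl\|T_{\lambda_i}f\big|_D\bigr\|_2^2<\infty$ for every bounded measurable $D$ (Lemma~\ref{lematrasladados3}); then writing $g\big|_D=\sum_i f_i'(g)\,T_{\lambda_i}f\big|_D$ and using Cauchy--Schwarz together with $\bigl(f_i'(g)\bigr)_i\in\ell_2$ shows the partial sums of the restriction operator $R_D$ are Cauchy in operator norm, so $R_D$ is compact (Proposition~\ref{proposicioncompacto}); but $R_D:L_2(\mathbb{R})\to L_2(D)$ is surjective onto an infinite-dimensional space, a contradiction. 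If you want to salvage your Fourier-analytic approach, you would have to supply a genuine sampling-theoretic input for weighted $L_2$ over the whole line; the restriction-operator argument is the standard way to avoid that entirely.
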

\begin{theorem}\cite[Corollary 2.10]{Odell2011}\label{teoremabaseequivalente12}
Let $1\le p \le 2$ If $\left\lbrace T_{\lambda_i} f\right\rbrace_{i\in \mathbb{N}}\subseteq L_p(\mathbb{R})$ is an unconditional basic sequence, it is equivalent to the unit vector basis of $\ell_p$. Hence, if $1\le p <2$, then $\overline{\left[ T_{\lambda_i}f:i\in\mathbb{N}\right]}\not=L_p(\mathbb{R})$.
\end{theorem}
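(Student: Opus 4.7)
My plan is to establish that $\{T_{\lambda_i}f\}$ is equivalent to the $\ell_p$ unit vector basis by proving matching upper and lower $p$-estimates on the norms of finite linear combinations.

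For the upper estimate I would rely on the fact that $L_p(\mathbb{R})$ has type $p$ for $1\le p\le 2$. Since $\{T_{\lambda_i}f\}$ is unconditional, if $K_u$ denotes its unconditional constant, averaging over Rademacher signs $\epsilon_i$ and applying the type-$p$ inequality gives
\[
\Bigl\|\sum_i a_i T_{\lambda_i}f\Bigr\|_p \le K_u\,\mathbb{E}_\epsilon\Bigl\|\sum_i \epsilon_i a_i T_{\lambda_i}f\Bigr\|_p \le K_u T_p(L_p)\Bigl(\sum_i |a_i|^p \|T_{\lambda_i}f\|_p^p\Bigr)^{1/p},
\]
and then use that translation is isometric on $L_p$ to replace $\|T_{\lambda_i}f\|_p$ by $\|f\|_p$.

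The lower estimate is the heart of the matter. I would attack it in two stages. First, I would invoke the Kadec--Pełczyński dichotomy: any normalized weakly null basic sequence in $L_p(\mathbb{R})$ ($1\le p<2$) admits a subsequence equivalent either to the unit vector basis of $\ell_p$ or to that of $\ell_2$. Second, and this is the critical step, I would rule out the $\ell_2$-alternative by exploiting the translation invariance of the sequence: if some subsequence $\{T_{\lambda_{i_k}}f\}$ were $\ell_2$-equivalent, then by recentering via $T_{-\lambda_{i_1}}$ and combining many such blocks along independent clusters of translates, one would produce finite linear combinations whose norms simultaneously exhibit $\ell_2$-lower growth and $\ell_p$-upper growth (from the type-$p$ upper estimate above), which is inconsistent once the number of terms is large enough. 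Once the $\ell_2$-alternative is eliminated, one has a subsequence with a lower $\ell_p$ estimate. Finally, to transfer this lower estimate from a subsequence to the whole sequence, I would use that for each finite index set $F$, unconditionality together with the isometric action of $T_\lambda$ allows one to ``translate'' the selected subsequence estimate onto any finite configuration of translates, yielding a uniform $c\|f\|_p(\sum|a_i|^p)^{1/p}$ lower bound.

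The second conclusion --- that for $1\le p<2$ the closed span cannot be all of $L_p(\mathbb{R})$ --- then follows because $L_p(\mathbb{R})$ for $p\neq 2$ is not isomorphic to $\ell_p$ (it contains $\ell_2$ as a subspace, for instance via independent Gaussians), so equivalence of a topological basis to the $\ell_p$ unit vector basis would force $L_p(\mathbb{R})\cong\ell_p$, a contradiction.

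The main obstacle I anticipate is the extraction/propagation step for the lower estimate: converting Kadec--Pełczyński's dichotomy (a subsequence statement) into a property of the full sequence, and in particular the argument that uses the translation-invariance to kill the $\ell_2$-alternative. This is where the special structure of translates, as opposed to general unconditional basic sequences in $L_p$, must be decisively used.
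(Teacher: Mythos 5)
Your upper estimate is fine (type $p$ plus unconditionality; for $p=1$ the triangle inequality already suffices), and so is the final deduction that the closed span cannot be all of $L_p(\mathbb{R})$ for $p<2$. The lower estimate, however, contains two genuine errors. First, the Kadec--Pe\l czy\'nski dichotomy you invoke is a theorem about $L_p$ with $p>2$; for $1\le p<2$ it is false. The space $L_p$ contains isometric copies of $\ell_r$ for every $r\in[p,2]$ (via $r$-stable random variables), so a normalized weakly null unconditional basic sequence in $L_p(\mathbb{R})$ can have every subsequence equivalent to the $\ell_r$ basis with $p<r<2$, which is neither of your two alternatives. (There is also a side issue at $p=1$: translates need not be weakly null there, since pairing with the constant function $1\in L_\infty$ gives the fixed value $\int f$.) Second, even granting the dichotomy, your proposed contradiction evaporates: an $\ell_2$-lower growth $c\,n^{1/2}$ and the type-$p$ upper bound $C\,n^{1/p}$ are perfectly compatible when $p\le 2$, because $n^{1/2}\le n^{1/p}$. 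A sequence equivalent to the $\ell_2$ basis automatically satisfies an upper $\ell_p$-estimate for $p<2$; ruling it out requires a \emph{lower} $\ell_p$-estimate, which is exactly what you are trying to prove, so this step is circular.

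The argument that actually works --- the one behind the cited Corollary 2.10 of Odell--Sari--Schlumprecht--Zheng, whose mechanism reappears in Section 3 of the present paper --- uses the translate structure differently. An unconditional basic sequence of translates forces the parameters $\{\lambda_i\}$ to be uniformly separated; one then chooses a bounded interval $Q$ carrying all but an arbitrarily small part of the mass of $f$ and splits the index set into finitely many classes on each of which the sets $Q+\lambda_i$ are pairwise disjoint, so the translates are close to a disjointly supported system. Disjointness of supports combined with unconditionality (a Johnson--Odell style averaging over Rademacher signs) yields the lower bound $\epsilon\sum_i|a_i|^p\le k_0K^p$ exactly as in Lemma~\ref{lemaincondicionallp} and Corollary~\ref{corolariooperadoralp} of this paper, and together with your type-$p$ upper bound this gives equivalence to the unit vector basis of $\ell_p$. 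Note that the paper states the theorem only as a citation, so there is no in-text proof to compare against beyond these ingredients.
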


\begin{theorem}\cite[Theorem 4.3]{Liu2012}
Let $1<p\le 2$. If $\{f_i\}_{i\in \mathbb{N}}\subseteq L_p(\mathbb{R}^d)$ is an unconditional basic sequence of translates of elements of a finite set, then $\overline{\left[f_i:i\in\mathbb{N}\right]}\not=L_p(\mathbb{R}^d)$.
\end{theorem}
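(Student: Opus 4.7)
My plan is to argue by contradiction and reduce the problem to the single-function case handled in Theorem~\ref{teoremabaseequivalente12}. Write $f_i=T_{\mu_i}g_{k_i}$ for fixed $g_1,\ldots,g_N\in L_p(\mathbb R^d)$, vectors $\mu_i\in\mathbb R^d$, and indices $k_i\in\{1,\ldots,N\}$, and suppose for contradiction that $\overline{[f_i:i\in\mathbb N]}=L_p(\mathbb R^d)$. Partition $\mathbb N=A_1\sqcup\cdots\sqcup A_N$ according to color by $A_k=\{i:k_i=k\}$. Since deleting the finite classes only perturbs $\{f_i\}$ in finitely many terms (and thus preserves the basic, unconditional, and spanning properties up to a finite-dimensional complement), we may assume each $A_k$ is infinite. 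Each subsequence $\{f_i\}_{i\in A_k}$ is then an unconditional basic sequence of translates of the single function $g_k$, and so by an $\mathbb R^d$-version of Theorem~\ref{teoremabaseequivalente12}---whose validity one first needs to verify by adapting the Odell--Sari argument from $d=1$ to arbitrary $d$---each color class is equivalent to the unit vector basis of $\ell_p$, with some common constant $C$.

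The next step is to upgrade the color-wise $\ell_p$-equivalences to a global one for $\{f_i\}$. Let $K$ be the unconditional basis constant of $\{f_i\}$. For the upper estimate, the triangle inequality and the elementary power-mean inequality give
\[
\Big\|\sum_i a_i f_i\Big\|\le \sum_{k=1}^N \Big\|\sum_{i\in A_k}a_i f_i\Big\|\le C\sum_{k=1}^N \Big(\sum_{i\in A_k}|a_i|^p\Big)^{\!1/p}\le CN^{1-1/p}\Big(\sum_i|a_i|^p\Big)^{\!1/p}.
\]
For the lower estimate, unconditionality supplies projections $P_{A_k}$ of norm at most $K$ onto the closed span of each color class, so $(\sum_{i\in A_k}|a_i|^p)^{1/p}\le CK\|\sum_i a_i f_i\|$; taking the $k$ that maximizes the left-hand side and using $\max_k\sum_{i\in A_k}|a_i|^p\ge N^{-1}\sum_i|a_i|^p$ yields $(\sum_i|a_i|^p)^{1/p}\le CKN^{1/p}\|\sum_i a_i f_i\|$. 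Hence $\overline{[f_i]}$ is isomorphic to $\ell_p$.

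For $1<p<2$ this is already the desired contradiction, because $L_p(\mathbb R^d)$ is not isomorphic to $\ell_p$ when $p\ne 2$. The remaining case $p=2$ is the delicate one, since $L_2(\mathbb R^d)\cong\ell_2$ rules out any isomorphism-type obstruction. Here I would move to the Fourier side, writing $\widehat{f_i}=\chi_{\mu_i}\widehat{g_{k_i}}$ with $\chi_\mu(\xi)=e^{-2\pi i\mu\cdot\xi}$, and invoke the K\"othe--Lorch theorem (every unconditional basis of a Hilbert space is equivalent to an orthonormal basis) to reformulate the assumption as saying that $\{\chi_{\mu_i}\widehat{g_{k_i}}\}$ is a Riesz basis of $L_2(\mathbb R^d)$. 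The remaining task is then to extend Olson's harmonic-analytic argument---which treats a single generator on $\mathbb R$---so that it accommodates finitely many generators on $\mathbb R^d$, extracting a contradiction from the sampling identity $\sum_i|\langle h,\chi_{\mu_i}\widehat{g_{k_i}}\rangle|^2\asymp\|h\|^2$ applied to well-chosen test functions~$h$.

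The main obstacle I anticipate is precisely this $p=2$ step. A naive pass to a single color class only produces, via Olson, a proper closed subspace of $L_2(\mathbb R^d)$; but finitely many proper subspaces can easily sum to the whole space, so the argument cannot be handled color-by-color and must instead exploit all $N$ generators simultaneously on the Fourier side. By contrast, the pigeonhole decomposition, the color-wise application of Theorem~\ref{teoremabaseequivalente12}, the passage to a global $\ell_p$-equivalence, and the non-isomorphism conclusion for $1<p<2$ are all essentially routine.
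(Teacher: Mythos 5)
This statement is quoted in the paper from \cite{Liu2012} without proof, so I can only measure your proposal against the machinery the paper itself develops (which does reprove it). Your plan has a genuine gap at $p=2$: after concluding that $\overline{[f_i]}$ is isomorphic to $\ell_2\cong L_2(\mathbb{R}^d)$, you correctly note that no isomorphic obstruction is available, and your proposed remedy --- pass to a Riesz basis via K\"othe--Lorch and redo the Olson--Zalik Fourier argument for $N$ generators in $\mathbb{R}^d$ --- is only a statement of intent, not an argument. The difficulty you yourself flag is real: to violate the lower frame bound you must choose $\widehat h$ supported where all $N$ functions $|\widehat{g_k}|$ are simultaneously small, while keeping a uniform Bessel bound for the relatively uniformly separated exponentials on that support; the set where $\max_k|\widehat{g_k}|\ge\epsilon$ has measure growing like $\epsilon^{-2}$, so the support of $\widehat h$ must be pushed into ever larger cubes as $\epsilon\to 0$, and the Bessel constant grows with the cube. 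Resolving this tension is the content of the missing step, and without it the theorem is not established for $p=2$. A secondary (and fixable) issue is that your $1<p<2$ argument leans on an unproved $\mathbb{R}^d$-version of Theorem~\ref{teoremabaseequivalente12} for each colour class; note, incidentally, that the upper estimate $\bigl\|\sum a_if_i\bigr\|\le C\bigl(\sum|a_i|^p\bigr)^{1/p}$ is free for any seminormalized unconditional basic sequence in $L_p$, $1<p\le 2$, from unconditionality plus the fact that $L_p$ has type $p$, so the only substantive content of the colour-class step is the lower estimate, which comes from relative uniform separation and disjointification (Lemma~\ref{lemaincondicionallp}, Corollary~\ref{corolariooperadoralp}).

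The paper's own results give a route that treats $1<p\le 2$ uniformly and avoids both the Fourier-side Riesz-basis argument and the colour-splitting. Translates of finitely many nonzero functions are seminormalized, so the coordinate functionals of the basic sequence are seminormalized on $X=\overline{[f_i]}$; Corollary~\ref{propositionmarcoincondicionalcud} then forces the translation sequence to be relatively uniformly separated, and Corollary~\ref{corolariopen12compacto} (resting on Corollary~\ref{corolariooperadoralp}, Lemma~\ref{lematrasladados3} and the inequality $p\le p'$) shows that every restriction operator $R_D:X\to L_p(D)$ with $D$ bounded is compact, whence $X\ne L_p(\mathbb{R}^d)$. I would replace your $p=2$ sketch (and, more economically, the whole plan) by this compactness argument.
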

\begin{theorem}\cite[Theorem 2.1, Corollary 2.3]{Freeman2014}\label{teoremabases2<p<+infinito}
Let $2<p <+\infty$, and let $\left\lbrace T_{\lambda_i}f \right\rbrace_{i\in \mathbb{N}}\subseteq L_p(\mathbb{R})$ be an unconditional basic sequence. If $\overline{\left[T_{\lambda_i}f:i\in\mathbb{N}\right]}$ is complemented in $L_p(\mathbb{R}^d)$, $\left\lbrace T_{\lambda_i}f \right\rbrace_{i\in \mathbb{N}}$ is equivalent to the unit vector basis of $\ell_p$. Hence, $\overline{\left[T_{\lambda_i} f:i\in\mathbb{N}\right]}\not=L_p(\mathbb{R}^d)$.
\end{theorem}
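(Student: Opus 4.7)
My plan is to combine a truncation argument that extracts $\ell_p$-structure from widely separated translates with the Kadec--Pe\l czy\'nski dichotomy for $L_p$, $2<p<\infty$, using the complementation hypothesis to rule out $\ell_2$-type behaviour and to upgrade $\ell_p$-equivalence from a subsequence to the whole sequence.

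First I would show that $\{\lambda_i\}$ cannot have a convergent subsequence: since the map $\lambda\mapsto T_\lambda f$ is continuous from $\mathbb{R}^d$ to $L_p(\mathbb{R}^d)$, a convergent subsequence of $\{\lambda_i\}$ would produce a norm-convergent subsequence of $\{T_{\lambda_i}f\}$, contradicting that the latter is basic. Hence $\|\lambda_i\|\to\infty$, and after passing to a subsequence I may also assume the gaps $|\lambda_{i_{k+1}}-\lambda_{i_k}|$ grow as fast as desired. Now, for each $R>0$ set $f_R:=f\chi_{B(0,R)}$; whenever $|\lambda_{i_k}-\lambda_{i_j}|>2R$ the truncated translates $T_{\lambda_{i_k}}f_R$ have pairwise disjoint supports, so they are isometric to the $\ell_p$ unit vector basis. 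Since $\|f-f_R\|_p\to 0$ as $R\to\infty$ and $\{T_{\lambda_i}f\}$ is unconditional basic with a fixed constant $K$, a standard small-perturbation argument (in the Bessaga--Pe\l czy\'nski style) yields a subsequence of $\{T_{\lambda_i}f\}$ equivalent to the $\ell_p$ basis.

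The truncation step can be started from any subsequence, so every subsequence of $\{T_{\lambda_i}f\}$ contains a further $\ell_p$-equivalent subsequence. The Kadec--Pe\l czy\'nski dichotomy asserts that any normalized unconditional basic sequence in $L_p$, $2<p<\infty$, admits a subsequence equivalent either to the unit vector basis of $\ell_2$ or to that of $\ell_p$; the truncation therefore forbids the $\ell_2$ alternative on every subsequence. The main obstacle is then to pass from subsequence $\ell_p$-equivalence to full-sequence $\ell_p$-equivalence, and this is precisely where the complementation of $X$ in $L_p(\mathbb{R}^d)$ enters: a bounded projection $P:L_p(\mathbb{R}^d)\to X$ allows the biorthogonal functionals of the Schauder basis $\{T_{\lambda_i}f\}$ to be extended to bounded functionals on $L_p(\mathbb{R}^d)$ of uniform norm, and combined with the uniform $\ell_p$-estimates coming from applying the truncation argument to all tails, this should promote $\ell_p$-equivalence to the whole sequence via a spreading-model or diagonal argument. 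Once this is established, $X\cong\ell_p$, and since $L_p(\mathbb{R}^d)\not\cong\ell_p$ for $p\ne 2$, the conclusion $X\ne L_p(\mathbb{R}^d)$ follows.
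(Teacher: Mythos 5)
Your first two steps are sound: a seminormalized basic sequence is uniformly separated in norm, so $\{\lambda_i\}$ has no convergent subsequence and $|\lambda_i|\to\infty$; and the truncation-plus-perturbation argument does produce subsequences of $\{T_{\lambda_i}f\}$ equivalent to the unit vector basis of $\ell_p$ (which also makes the Kadec--Pe\l czy\'nski step redundant). The genuine gap is exactly where you place it, and your proposed fix does not close it. Knowing that every subsequence admits a further $\ell_p$-equivalent subsequence yields no uniform upper estimate $\bigl\|\sum_{i\le n} a_i T_{\lambda_i}f\bigr\|\le M\bigl(\sum_{i\le n}|a_i|^p\bigr)^{1/p}$ for the \emph{whole} sequence: the Bessaga--Pe\l czy\'nski perturbation needs the errors $\|f-f_{R_i}\|_p$ to be summable, which forces $R_i\to\infty$ and hence gaps $|\lambda_{i_{k+1}}-\lambda_{i_k}|$ growing along a subsequence, and no diagonal or spreading-model argument converts subsequence information into a single constant valid for all finite linear combinations. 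This is not a technical artifact: by \cite[Example 2.16]{Odell2011} there are (automatically uniformly separated) unconditional basic sequences of translates in $L_p$, $p>2$, that are \emph{not} equivalent to the $\ell_p$ basis, so the upper estimate genuinely requires the complementation hypothesis, and merely noting that $P^*$ extends the coordinate functionals with uniformly bounded norms is not an argument.

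The way complementation is actually used (see Proposition \ref{proposicioncomobases2<p<+infinito}, which generalizes this statement to approximate frames, and \cite{Freeman2014}) is dual and local. Since $f_i'(f_i)=1$ and $f_i=T_{\lambda_i}f$ carries, up to a small tail, all of its mass on $Q+\lambda_i$ for a fixed large cube $Q$, the extended functionals $P^*f_i'\in L_{p'}(\mathbb{R}^d)$ satisfy $\left\|P^*f_i'\big|_{Q+\lambda_i}\right\|\ge c>0$; after a finite partition of the indices the cubes $Q+\lambda_i$ are pairwise disjoint, and since $\sum_i h'(f_i)\,P^*f_i'$ converges unconditionally in $L_{p'}(\mathbb{R}^d)$ for every $h'\in X'$ (Lemma \ref{lemmadual}), the disjointification Lemma \ref{lemaincondicionallp} applied in $L_{p'}$ with exponent $p'<2$ (where Orlicz's theorem alone would only give the exponent $2$) yields $\sum_i|h'(f_i)|^{p'}<\infty$. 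Dualizing produces the bounded map $\ell_p\to X$, $(a_i)\mapsto\sum_i a_if_i$, i.e., the missing upper $\ell_p$ estimate. The lower estimate is the easy direction for $p>2$ (cotype $p$ and Orlicz), and together they show this map is an isomorphism onto $X$, whence $\{T_{\lambda_i}f\}$ is equivalent to the $\ell_p$ basis and $X\cong\ell_p\ne L_p(\mathbb{R}^d)$. If you want to salvage your outline, replace the ``promotion'' step by this localization of the dual functionals.
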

\begin{theorem}\label{teoremamarcoparap>2}\cite[Theorem 3.2]{Freeman2014}
Let $2<p<+\infty$, and $d\in\mathbb{N}$. If $\left\lbrace \lambda_i\right\rbrace_{i\in\mathbb{N}}\subseteq \mathbb{R}^d$ is an unbounded sequence, there exists $f\in L_p(\mathbb{R}^d)$ and $\left\lbrace f_i'\right\rbrace_{i\in\mathbb{N}} \subseteq \left(L_{p}(\mathbb{R}^d)\right)'$ such that $\left\lbrace T_{\lambda_i}f,f_i'\right\rbrace_{i\in\mathbb{N}}$ is an unconditional Schauder frame for $L_p(\mathbb{R}^d)$.
\end{theorem}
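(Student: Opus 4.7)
The plan is to combine two facts: for $p>2$, a sufficiently separated sequence of translates of a compactly supported function in $L_p(\mathbb{R}^d)$ is $(1+\varepsilon)$-equivalent to the $\ell_p$ unit-vector basis (by disjointness of supports); and $L_p(\mathbb{R}^d)$ admits an unconditional Schauder basis $\{h_k,h_k'\}_k$, say the Haar basis. Since the definition of Schauder frame places no lower bound on the coefficient functionals (they may vanish for an arbitrary set of indices), I may freely restrict attention to a subsequence of $\{\lambda_i\}_i$ and extend trivially by zero outside of it, reducing the problem to building $f$ and $\{f_{m_i}'\}_i$ along some well-chosen subsequence.

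First I would extract a subsequence $\{\lambda_{m_i}\}_i$ whose successive differences $|\lambda_{m_{i+1}}-\lambda_{m_i}|$ tend to infinity as fast as required, and set $f_i'=0$ for $i\notin\{m_j\}_j$. Next I would approximate each basis vector $h_k$ in $L_p$-norm by a compactly supported function $\phi_k$ with $\|\phi_k\|_p$ uniformly bounded and support $K_k$ of controlled diameter, and build
\[
f=\sum_k c_k\,T_{\mu_k}\phi_k,
\]
where $\{c_k\}_k\subset(0,\infty)$ is summable and $\{\mu_k\}_k\subset\mathbb{R}^d$ is chosen so that the sets $\mu_k+K_k$ are pairwise disjoint; this guarantees $f\in L_p(\mathbb{R}^d)$. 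Each translate $T_{\lambda_{m_i}}f=\sum_k c_k\,T_{\lambda_{m_i}+\mu_k}\phi_k$ then splits into disjointly supported pieces, and, by the lacunarity of $\{\lambda_{m_i}\}_i$ iterated against the growth of $\{\mu_k\}_k$, different indices $i$ place their pieces (at each scale $k$) in pairwise disjoint regions. Partitioning $\mathbb{N}$ into blocks $B_1,B_2,\dots$ and defining $f_{m_i}'$ for $i\in B_k$ to extract the Haar coefficient $h_k'(g)$ of $g$, distributed across $B_k$ using the disjoint supports, I would verify the reconstruction
\[
g=\sum_k h_k'(g)\,h_k=\sum_i f_{m_i}'(g)\,T_{\lambda_{m_i}}f.
\]

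The main obstacle will be the quantitative control of the cross-interference: the right-hand side expands as a double sum over pairs $(i,k)$, and only the ``diagonal'' contribution (indices $i$ matched with their block $B_k$) rebuilds $g$, while the off-diagonal remainder must vanish in $L_p$-norm uniformly and unconditionally under arbitrary sign changes on finite partial sums. This is precisely where $p>2$ enters: disjointly-supported sums in $L_p$ with $p>2$ satisfy an $\ell_p$-upper bound, so the off-diagonal contribution can be forced below any prescribed threshold by taking $\{c_k\}$ to decay fast enough and by iterating the lacunarity of $\{\lambda_{m_i}\}_i$ against the diameters of $K_k$. Combined with the unconditionality of the Haar basis $\{h_k,h_k'\}_k$, the delicate bookkeeping of these estimates should yield unconditional convergence of the frame series with a uniform constant, which is the actual heart of the argument.
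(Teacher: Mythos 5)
Your overall architecture (a compactly supported unconditional basis $\{h_k,h_k'\}$, disjointness of supports, the $\ell_p$ upper bound for disjointly supported sums when $p>2$, reduction to a lacunary subsequence with $f_i'=0$ elsewhere, and an error term to be made small) is the same as in the construction the paper sketches from \cite{Freeman2014}. But the specific form of $f$ you propose, $f=\sum_k c_k T_{\mu_k}\phi_k$ with a \emph{single} translate of $\phi_k$ per scale $k$ and $\mu_k$ chosen only to separate supports, cannot work, and this is not a bookkeeping issue. The reconstruction requires that linear combinations of the fixed functions $T_{\lambda_{m_i}}f$ reproduce each $h_k$ at its original location in $\mathbb{R}^d$; the only pieces of $T_{\lambda_{m_i}}f$ available are $c_{k'}T_{\lambda_{m_i}+\mu_{k'}}\phi_{k'}$, and such a piece lands on $\supp h_k$ only when $\lambda_{m_i}+\mu_k=0$, i.e.\ when $\mu_k=-\lambda_{m_i}$ for one particular index $i$. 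So either your ``diagonal'' is empty (generic $\mu_k$: every piece is supported far from $\supp h_k$ and no choice of scalar coefficients can rebuild $h_k$), or each block $B_k$ collapses to a single index $i_k$ with $f'_{m_{i_k}}=c_k^{-1}h_k'$; in the latter case $f\in L_p$ forces $c_k\to 0$, hence $c_k^{-1}\to\infty$, and the off-diagonal remainder $\sum_k c_k^{-1}h_k'(g)\,(\cdot)$ has terms of size $c_{k'}/c_k$ which are not summable, no matter how fast the lacunarity of $\{\lambda_{m_i}\}$ grows.

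The missing device is exactly what the paper's sketch is built around: for each $k$ one puts $N_k$ translates $T_{-\lambda_j}h_k$, $j\in J_k$ with $|J_k|=N_k$, into $f$, each with coefficient $N_k^{-1/2}$, and sets $f_i'=N_k^{-1/2}h_k'$ for $i\in J_k$. Then every $T_{\lambda_i}f$ with $i\in J_k$ contains the exact piece $N_k^{-1/2}h_k$, the diagonal sums to $\sum_{i\in J_k}N_k^{-1}h_k'(g)h_k=h_k'(g)h_k$ with the functionals staying bounded, while $\|f\|_p^p\approx\sum_k N_k^{1-p/2}$ and the disjointly supported error is controlled by $\left(\sum_k N_k^{1-p/2}\right)^{1/p}$. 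It is precisely the exponent $1-\tfrac{p}{2}<0$, available only because $p>2$, that reconciles ``$f\in L_p$'' with ``reconstruction coefficient equal to $1$'' --- this averaging over $N_k$ indices per scale is the heart of the proof and is absent from your plan. Note also that even after this correction the error is small but nonzero, so one obtains only an unconditional \emph{approximate} Schauder frame and must invoke Lemma \ref{lemamarcoaproximado} to convert it into a genuine Schauder frame; your stated goal of verifying the reconstruction identity exactly cannot be achieved.
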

The proof of Theorem \ref{teoremamarcoparap>2} given in \cite{Freeman2014} gives  a Schauder frame $\left\lbrace T_{\lambda_i}f,f_i'\right\rbrace_{i\in\mathbb{N}}$ for $L_p(\mathbb{R}^d)$ such that $\{f_i'\}_{i\in\mathbb{N}}$ tends to zero in norm as $i$ tends to infinity. Identifying the dual space $\left(L_p(\mathbb{R}^d\right)'$ of $L_p(\mathbb{R}^d)$ with $L_{p'}(\mathbb{R}^d)$ in the usual way $\left(\frac{1}{p}+\frac{1}{p'}=1\right)$, the authors ask the following question:
\begin{problem}\cite[Problem 6.3]{Freeman2014}\label{pregunta1}
Let $1<p<+\infty$, and let $\{f_i\}_{i\in\mathbb{N}}$ be a sequence of translates of $f\in L_p(\mathbb{R})$. Is there a seminormalized sequence $\{f_i'\}_{i\in\mathbb{N}}\subseteq L_{p'}\left(\mathbb{R}\right)$ such that $\left\lbrace f_i,f_i'\right\rbrace$ forms an unconditional Schauder frame for $L_p(\mathbb{R})$?
\end{problem}

In this article, we study mainly unconditional (approximate) Schauder frames of translates of a function or of finitely many functions for subspaces of $L_p(\mathbb{R}^d)$, and in particular, we focus on Problem \ref{pregunta1}.
In Section \ref{preliminares}, we introduce some notation, recall some known facts and prove some general results. We study Schauder frames and approximate Schauder frames in general Banach spaces, and present sufficient conditions for the existence of seminormalized coordinates, which we will use in the proofs of our main results. In Section \ref{generalresults}, we introduce Schauder frames of translates in $L_p(\mathbb R^d)$ and prove some technical results that are used in the sequel. Section~\ref{results_in_L_p} has the main results of the article. In Section~\ref{2<p} we study the case $2<p<+\infty$. Modifying the proof of Theorem \ref{teoremamarcoparap>2} from \cite{Freeman2014}, we  prove in Theorem~\ref{Teoremamarcosincondicionalescon2<p<+infinito} that, for every unbounded sequence $\left\lbrace \lambda_i\right\rbrace_{i\in\mathbb{N}}\subseteq \mathbb{R}^d$, we can take  a subsequence $\left\lbrace \lambda_{m_i}\right\rbrace_{i\in\mathbb{N}}\subseteq \mathbb{R}^d$, a function $f\in L_p(\mathbb{R}^d)$, and a seminormalized sequence $\{f_i'\}_{i\in\mathbb{N}}\subseteq L_{p'}(\mathbb{R}^d)$ such that $\left\lbrace T_{\lambda_{m_i}}f,f_i'\right\rbrace_{i\in\mathbb{N}}$ is an unconditional Schauder frame for $L_p(\mathbb{R}^d)$.
This shows, in particular, that there are sequences of translates $\{f_i\}$ for which the answer to the question in Problem \ref{pregunta1} is positive (for $2<p<+\infty$). In Section \ref{1<p<=2} we study the case $1<p\le 2$. In opposition to the previous case, we prove that for any such $p$, Problem \ref{pregunta1} has a negative answer, no matter how we chose the sequence of translates. In Section~\ref{p=1} we consider $p=1$. We show in Proposition~\ref{propositionele1} that, under a natural geometric condition on the sequences of translates, any subspace of $L_1(\mathbb{R}^d)$ admitting  an unconditional frame of translates must be isomorphic to $\ell_1$.

\section{Preliminaries.}\label{preliminares} \label{seminormalizedcoordinates}

In this section, we present some  definitions and basic results. Unless otherwise stated, we assume that all of the spaces we consider are infinite-dimensional and separable Banach spaces over $\mathbb{K}$, where $\mathbb{K}$ may be chosen to be either $\mathbb{C}$ or $\mathbb{R}$.
If $X$ is a Banach space, we denote its dual  by $X'$, and by $B_X$ its closed unit ball. If $T:X\to Y$ is a bounded operator, its transpose $T^*:Y'\to X'$ is  given by $T^*(y')=y'\circ T$ for $y'\in Y'$. For $\lambda\in \mathbb R^d$, $|\lambda|$ denotes the Euclidean norm of $\lambda$.

We say that the sequence $\{f_i\}_{i\in\mathbb{N}}\subset X$ is \emph{bounded below} if there is $r>0$ such that $||f_i||\ge r$ for all $i\in\mathbb{N}$. The sequence $\{f_i\}_{i\in\mathbb{N}}$ is called \emph{seminormalized} if it is both bounded and bounded below.

Recall that a Banach space $X$ has \emph{type p} if there exists $M>0$ such that
\begin{align}
\left(\int_{0}^{1}\left|\left|\sum\limits_{i=1}^{n}r_i(t)f_i\right|\right|^{2}dt\right)^{\frac{1}{2}}\le& M\left(\sum\limits_{i=1}^{n}\left|\left|f_i\right|\right|^{p}\right)^{\frac{1}{p}}  \nonumber
\end{align}
 for every finite sequence $ \{f_i\}_{1\le i\le n}\subseteq X$.
Also, $X$ has \emph{cotype $s$}  if there is $M>0$ such that
\begin{align}
\left(\sum\limits_{i=1}^{n}\left|\left|f_i\right|\right|^{s}\right)^{\frac{1}{s}}
\le& M\left(\int_{0}^{1}\left|\left|\sum\limits_{i=1}^{n}r_i(t)f_i\right|\right|^{2}dt\right)^{\frac{1}{2}}\;\;\;\;\forall n\in\mathbb{N} \label{cotipo1}
\end{align}
for every $ \{f_i\}_{1\le i\le n}\subseteq X$. Here, $\left\lbrace r_i\right\rbrace_{i\in\mathbb{N}}$ is the sequence of Rademacher functions given by 	
\begin{align*}
r_i(t)=\sgn{(\sin(2^i\pi t))} \;\;\;\;\forall t\in[0,1] \;\;\;\forall i\in\mathbb{N}.
\end{align*}

It is known that for every measure space $(\Omega, \Sigma, \nu)$ and $1\le p<+\infty$, the space $L_p(\nu)$  has type $\min\{2,p\}$ and cotype $\max\{2,p\}$ \cite{Lindenstrauss1977.2}, and that if a space $X$ is of type $p>1$, its dual space $X'$ is of cotype $p'$, where $\frac{1}{p}+\frac{1}{p'}=1$ \cite[Proposition 1.e.17]{Lindenstrauss1977.2}.
From these facts and Orlicz's Theorem (see  \cite{Orlicz1933, Orlicz1933b} or Theorems 4.2.1 and 4.2.2 from \cite{Kadets1997}), we have the following.
\begin{proposition}\label{TeoremadeOrliczextendido}
Let $1<p<+\infty$, $s=\max\{2,p\}$,  $q=\max\{2,p'\}$and let  $X$ be a subspace $L_p(\nu)$, then
\begin{enumerate}[label=\rm{(\alph*)}]
\item \label{Orliczcomun} If $\sum\limits_{i=1}^{\infty}f_i$ converges unconditionally in $X$, then $\sum\limits_{i=1}^{\infty}||f_i||_X^{s}$ converges.
\item \label{Orliczdual} If $\sum\limits_{i=1}^{\infty}f_i'$ converges unconditionally in $X'$, then $\sum\limits_{i=1}^{\infty}||f_i'||_{X'}^{q}$ converges.
\end{enumerate}
\end{proposition}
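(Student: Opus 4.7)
The plan is to combine the three facts quoted just before the proposition: $L_p(\nu)$ has cotype $\max\{2,p\}$ and type $\min\{2,p\}$; the type of a space yields cotype of its dual; and Orlicz's theorem (in the form of Theorems 4.2.1--4.2.2 of \cite{Kadets1997}) states that, in a space of cotype $r$, any unconditionally convergent series $\sum g_i$ satisfies $\sum \|g_i\|^r<+\infty$. Part \ref{Orliczcomun} is then essentially immediate: since $X\subseteq L_p(\nu)$, $X$ inherits cotype $s=\max\{2,p\}$, and Orlicz's theorem applied directly to $X$ yields $\sum \|f_i\|_X^s<+\infty$.

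For part \ref{Orliczdual} I would first note that $X$, as a subspace of $L_p(\nu)$, has type $t:=\min\{2,p\}$, and that $t>1$ because $p>1$. The duality result (Proposition 1.e.17 of \cite{Lindenstrauss1977.2}) then gives that $X'$ has cotype $t'$, where $\tfrac{1}{t}+\tfrac{1}{t'}=1$. The only thing to verify is the book-keeping identity $t'=q$: if $1<p\le 2$ then $t=p$, so $t'=p'\ge 2$ and $q=\max\{2,p'\}=p'$; if $p>2$ then $t=2$, so $t'=2$, while $p'<2$ gives $q=\max\{2,p'\}=2$. Either way $t'=q$, so $X'$ has cotype $q$. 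Applying Orlicz's theorem to $X'$ then yields $\sum \|f_i'\|_{X'}^q<+\infty$.

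The proof really has no obstacle — all the analytic work is packaged inside the cited type/cotype and duality statements and Orlicz's theorem. The only point that requires any care is making explicit the case analysis showing $t'=q$, and confirming that the duality $\text{type}(X)\Rightarrow\text{cotype}(X')$ is available (which needs $t>1$, guaranteed here by the hypothesis $p>1$). Both parts then reduce to a one-line application of Orlicz's theorem.
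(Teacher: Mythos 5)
Your argument is correct and is precisely the one the paper intends: the paper gives no written proof beyond asserting that the proposition "follows from these facts and Orlicz's Theorem," and your write-up supplies exactly that chain (cotype $s$ of $X$ inherited from $L_p(\nu)$ for part (a); type $t=\min\{2,p\}>1$ of $X$ passing to cotype $t'=q$ of $X'$ for part (b), with the correct case-check that $t'=q$). In particular you rightly route part (b) through the type-to-dual-cotype duality rather than treating $X'$ as a subspace of $L_{p'}$, which is the one place a careless reader could go wrong.
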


The following remark is a consequence of the previous proposition and a standard uniform boundedness argument.

\begin{remark}\label{lemaacodadoabajovaaeleq} \label{lemaacodadoabajovaaeleq2}
With $p$, $q$, $s$ and $X$ as in the previous proposition, the following hold.
\begin{enumerate}[label=\rm{(\alph*)}]
\item
 If $\{f_i\}_{i\in\mathbb{N}}\subset X$ is bounded below and $\sum\limits_{i=1}^{\infty}f_i'(g)f_i$ converges unconditionally for each $g\in X$, we can define a bounded linear operator $\Psi_s:X \rightarrow \ell_s$ by
\begin{align*}
\Psi_s(g)=&\left(f_i'(g)\right)_{i\in\mathbb{N}} \;\;\;\;\text{for } g\in X.
\end{align*}
\item If $\left\lbrace f_i'\right\rbrace_{i\in\mathbb{N}}\subset X'$ is bounded below and $\sum\limits_{i=1}^{\infty}h'(f_i)f_i'$ converges unconditionally for each $h'\in X'$, then we can define a bounded linear operator $\Theta_{q}:X' \rightarrow \ell_q$  by
\begin{align*}
\Theta_q(h')=&\left(h'(f_i)\right)_{i\in\mathbb{N}} \;\;\;\;\text{for } h'\in X'.
\end{align*}
Let us write $\Phi_{q'}:= \Theta_{q}^{*}$. If we denote by
$\{e^{(i)}\}_{i\in\mathbb{N}}$ the unit vector basis of $\ell_{q'}$, it is easy to check that $\Phi_{q'}\left(e^{(j)}\right)=f_j$ for each $j\in \mathbb{N}$. Since $\Phi_{q'}$ is continuous, this implies that
for $\textbf{\textit{a}}=\left(a_i\right)_{i\in\mathbb{N}} \in \ell_{q'}$ we have
\begin{align*}
\Phi_{q'}(\textbf{\textit{a}})=&\Phi_{q'}\left(\sum\limits_{i=1}^{\infty}a_ie^{(i)}\right)=\sum\limits_{i=1}^{\infty}a_if_i . \end{align*}
\end{enumerate}
\end{remark}

A sequence $\{f_i\}_{i\in\mathbb{N}}$ in a Banach space $X$ is a \emph{Schauder basis} for $X$ if every element $g\in X$ has a unique expansion of the form
\begin{align}
g=&\sum\limits_{i=1}^{\infty}a_if_i, \label{bases}
\end{align}
with  $\{a_i\}_{i\in\mathbb{N}}\subset \mathbb K$. A sequence that is a Schauder basis for the closure of its span $\overline{\left[f_i:i\in\mathbb{N}\right]}$ is called a \textit{basic sequence}. A Schauder basis is called is \emph{unconditional} if the convergence in (\ref{bases}) is unconditional for every $g\in X$.

If $\{f_i\}_{i\in\mathbb{N}}$ is a Schauder basis for $X$, there is a uniquely determined sequence $\{f_i'\}_{i\in\mathbb{N}}$ in its dual space $X'$ such that $f_i'(f_j)=\delta_{ij}$ for every pair of positive integers $(i,j)$, and thus
\begin{align}
g=&\sum\limits_{i=1}^{\infty}f_i'(g)f_i \;\;\;\forall g\in X. \label{marco}
\end{align}
We call the sequence $\{f_i'\}_{i\in\mathbb{N}}$ the \emph{sequence of coordinate functionals} corresponding to the basis.

A sequence $\left\lbrace f_i,f_i'\right\rbrace_{i\in\mathbb{N}}\subseteq X\times X'$ for which (\ref{marco}) holds of is called a \emph{Schauder frame} for $X$. Note that, in this case, we do not require uniqueness of expansions like in \eqref{bases}. We call the sequence $\{f_i'\}_{i\in\mathbb{N}}$ the \emph{sequence of coordinate functionals} of the frame. If the convergence in (\ref{marco}) is unconditional for every $g\in X$, the Schauder frame is called \emph{unconditional}. Note that $f_i'(f_j)=\delta_{ij}$ for every $i,j \in \mathbb{N}$ if and only if $\{f_i\}_{i\in \mathbb{N}}$ is basic.

If $\left\lbrace f_i,f_i'\right\rbrace_{i\in\mathbb{N}}$ is a Schauder frame for $X$, there exists $K\ge 1$ such that for every $n\in \mathbb N$ and every $g\in X$ we have
\begin{align*}
\left|\left|\sum\limits_{i=1}^{n}f_i'(g)f_i\right|\right|_{X}\le& K||g||_X.
\end{align*}
The infimum of such $K$  is called the \emph{frame constant} of $\left\lbrace f_i,f_i'\right\rbrace_{i\in\mathbb{N}}$. If the frame is unconditional, there exists $K\ge 1$ such that
\begin{align*}
\left|\left|\sum\limits_{i=1}^{\infty}c_if_i'(g)f_i\right|\right|_{X}\le& K||g||_{X}||\emph{c}||_{\infty}.
\end{align*}
for all $ g\in X$ and $ \emph{\textbf{c}}=\left(c_i\right)_{i\in\mathbb{N}} \in \ell_{\infty}$.
The infimum of such constants $K$ is called the \emph{unconditional constant} of the frame $\left\lbrace f_i,f_i'\right\rbrace_{i\in\mathbb{N}}$.

In \cite{Freeman2014}, an extension of the concept of Schauder frames was introduced: a sequence $\left\lbrace f_i,f_i'\right\rbrace_{\in\mathbb{N}}\subseteq X\times X'$ is called an \emph{approximate Schauder frame} for $X$ if there is an isomorphism $S:X\rightarrow X$ given by
\begin{align}
S(f)=&\sum\limits_{i=1}^{\infty}f_i'(g)f_i \;\;\;\;\forall g\in X. \label{aproximado}
\end{align}
The operator $S$ is called the \emph{frame operator}. An approximate Schauder frame is said to be \emph{unconditional} if the convergence in (\ref{aproximado}) is unconditional for each $g \in X$. Note that, by the uniform boundedness principle, one can extend the concepts of frame constant and unconditional frame constant to approximate Schauder frames and unconditional approximate Schauder frames respectively, though unlike the case of frames, they might be smaller than one. We will define similar constants for a larger class of sequences: for each sequence $\left\lbrace f_i,f_i'\right\rbrace_{i\in\mathbb{N}}\subseteq X\times X'$ we define the \emph{frame constant} of $\left\lbrace f_i,f_i'\right\rbrace_{i\in\mathbb{N}}$ by
\begin{align}
K\left(\left\lbrace f_i,f_i'\right\rbrace_{i\in\mathbb{N}}\right)=&\max{\left\lbrace 1, \sup\limits_{n\in\mathbb{N}}\sup\limits_{g\in B_{X}}\left|\left|\sum\limits_{i=1}^nf_i'(g)f_i\right|\right|\right\rbrace},\nonumber
\end{align}
In a similar way, we define the \emph{unconditional frame constant} of $\left\lbrace f_i,f_i'\right\rbrace_{i\in\mathbb{N}}$ by
\begin{align}
K_u\left(\left\lbrace f_i,f_i'\right\rbrace_{i\in\mathbb{N}}\right)=&\max{\left\lbrace 1, \sup\limits_{n\in\mathbb{N}}\sup\limits_{g\in B_{X}}\sup\limits_{\textbf{c}\in B_{\ell_{\infty}}}\left|\left|\sum\limits_{i=1}^n c_if_i'(g)f_i\right|\right| \right\rbrace}.\nonumber
\end{align}
Note that, if $\sum\limits_{i=1}^{\infty}f_i'(g)f_i$ converges for every $g\in X$, then by the uniform boundedness principle, $K\left(\left\lbrace f_i,f_i'\right\rbrace_{i\in\mathbb{N}}\right)$ is finite. Similarly, if the series converges unconditionally for each $g\in X$, $K_u\left(\left\lbrace f_i,f_i'\right\rbrace_{i\in\mathbb{N}}\right)$ is finite.

\bigskip
The rest of this section is devoted to proving some technical results. Their main goal is to obtain, from a given approximate Schauder frame $\left\lbrace f_i,f_i'\right\rbrace_{i\in\mathbb{N}}$ for $X$, a new Schauder frame for $X$ with seminormalized coordinates, which preserves some properties of the original aproximate frame. We  need the following lemma from \cite{Freeman2014}.
\begin{lemma}\label{lemamarcoaproximado}\cite[Lemma 3.1]{Freeman2014}
Let $X$ be a Banach space. If $\left\lbrace f_i,f_i'\right\rbrace_{\in\mathbb{N}}\subseteq X\times X'$ is an approximate Schauder frame for $X$ with frame operator $S$, then $\left\lbrace f_i,\left(S^{-1}\right)^{*}f_i'\right\rbrace_{\in\mathbb{N}}$ is a Schauder frame for $X$. Moreover, if $\left\lbrace f_i,f_i'\right\rbrace_{\in\mathbb{N}}$ is unconditional, so is $\left\lbrace f_i,\left(S^{-1}\right)^{*}f_i'\right\rbrace_{\in\mathbb{N}}$.
\end{lemma}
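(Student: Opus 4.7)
The plan is to verify the defining identity of a Schauder frame by direct computation, exploiting the fact that $S$ is invertible. By the definition of the transpose, for every $g\in X$ and every $i\in\mathbb{N}$ one has
\begin{align*}
\bigl((S^{-1})^{*}f_{i}'\bigr)(g) \;=\; f_{i}'\bigl(S^{-1}g\bigr).
\end{align*}
So, to show that $\{f_{i},(S^{-1})^{*}f_{i}'\}_{i\in\mathbb{N}}$ is a Schauder frame for $X$, it suffices to establish that
\begin{align*}
g \;=\; \sum_{i=1}^{\infty} f_{i}'\bigl(S^{-1}g\bigr)\, f_{i} \qquad \text{for every } g\in X.
\end{align*}

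The key step is then a change of variable: setting $h := S^{-1}g \in X$, the right-hand side becomes $\sum_{i=1}^{\infty} f_{i}'(h)\,f_{i}$, which by the defining property \eqref{aproximado} of the approximate Schauder frame equals $S(h) = S(S^{-1}g) = g$. This gives the desired expansion, with the series convergent in norm because the original series defining $S$ converges for every element of $X$ (in particular for $h = S^{-1}g$).

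For the unconditional case, the same substitution works verbatim. If the series $\sum_{i=1}^{\infty} f_{i}'(h)\, f_{i}$ converges unconditionally for each $h\in X$, then applying this with $h = S^{-1}g$ shows that $\sum_{i=1}^{\infty} f_{i}'(S^{-1}g)\, f_{i} = \sum_{i=1}^{\infty} \bigl((S^{-1})^{*}f_{i}'\bigr)(g)\, f_{i}$ converges unconditionally for each $g\in X$, which is exactly the unconditionality of the new frame.

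There really is no serious obstacle here: the argument is a one-line change of variable combined with the identity $(S^{-1})^{*}f_{i}' = f_{i}'\circ S^{-1}$. The only thing to be careful about is ensuring that $(S^{-1})^{*}f_{i}'$ still lies in $X'$, which is automatic since $S^{-1}:X\to X$ is bounded and $f_{i}'\in X'$, so $(S^{-1})^{*}f_{i}' = f_{i}'\circ S^{-1} \in X'$.
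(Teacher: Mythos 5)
Your proof is correct: the identity $\bigl((S^{-1})^{*}f_{i}'\bigr)(g)=f_{i}'(S^{-1}g)$ combined with the substitution $h=S^{-1}g$ reduces everything (including unconditionality) to the defining property of the frame operator, and this is exactly the standard argument. The paper does not reprove this lemma but cites it from \cite{Freeman2014}, where the proof is the same direct verification, so there is nothing to add.
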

In the proof of our next lemma, we first obtain an approximate Schauder frame with seminormalized coordinates from an approximate Schauder frame, by adding to the coordinate functionals a seminormalized sequence of functionals with sufficiently small norms, and then, by an application of Lemma \ref{lemamarcoaproximado}, we get a Schauder frame with seminormalized coordinate functionals from it.
\begin{lemma}\label{lemaseminormalizar}
Let $\left\lbrace f_{i}, f_i' \right\rbrace_{i\in \mathbb{N}}\subseteq X\times X'$ be an approximate Schauder frame for $X$ with $\{f_i\}_{i\in\mathbb{N}}$ bounded below. If there is a seminormalized sequence $\{g_i'\}_{i\in\mathbb{N}}\subseteq X'$ such that $\sum\limits_{i=1}^{\infty}g_i'(g)f_i$ converges unconditionally for each $g\in X$, then there is a seminormalized sequence $\{F_i'\}_{i\in\mathbb{N}}\subseteq X'$ such that $\left\lbrace f_{i}, F_i' \right\rbrace_{i\in \mathbb{N}}$ is a Schauder frame for $X$, which is unconditional if $\left\lbrace f_{i}, f_i' \right\rbrace_{i\in \mathbb{N}}$ is unconditional.
\end{lemma}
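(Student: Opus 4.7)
The plan is to perturb the functionals $f_i'$ using the auxiliary sequence $g_i'$, producing a seminormalized family $\tilde f_i'$ for which $\{f_i,\tilde f_i'\}$ is still an approximate Schauder frame, and then to apply Lemma~\ref{lemamarcoaproximado} to pass to a genuine Schauder frame while preserving seminormalization.

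First I would observe that $\{f_i'\}$ is uniformly bounded. The frame constant $K:=K(\{f_i,f_i'\}_{i\in\mathbb{N}})$ is finite, so differencing consecutive partial sums gives $|f_i'(g)|\,\|f_i\|\le 2K\|g\|$, and the assumption $\|f_i\|\ge r>0$ then forces $\|f_i'\|\le 2K/r=:M$ for every $i$. Let $0<a\le \|g_i'\|\le b$ for all $i$, and set $L:=K_u(\{f_i,g_i'\}_{i\in\mathbb{N}})$, finite by the uniform boundedness principle. Fix $\varepsilon>0$ with $\varepsilon L\,\|S^{-1}\|<1$. The identity $2v=(u+v)-(u-v)$ yields $\|u+v\|+\|u-v\|\ge 2\|v\|$ in any normed space, so for each $i$ I can select a sign $\sigma_i\in\{+1,-1\}$ with $\|f_i'+\sigma_i\varepsilon\,g_i'\|\ge \varepsilon\|g_i'\|\ge \varepsilon a$. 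Setting $\tilde f_i':=f_i'+\sigma_i\varepsilon\,g_i'$ then gives $\varepsilon a\le \|\tilde f_i'\|\le M+\varepsilon b$, so $\{\tilde f_i'\}$ is seminormalized.

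Next I would check that $\{f_i,\tilde f_i'\}$ is an approximate Schauder frame. Define $T(g):=\sum_i\sigma_i\varepsilon\,g_i'(g)f_i$; this converges unconditionally for every $g\in X$ because $\sum_i g_i'(g)f_i$ does, and $\|T\|\le L\varepsilon<\|S^{-1}\|^{-1}$. Hence $\tilde S:=S+T$ is an isomorphism of $X$ by a Neumann series argument, and
\[
\sum_{i=1}^\infty \tilde f_i'(g)f_i \;=\; S(g)+T(g)\;=\;\tilde S(g) \qquad \forall g\in X,
\]
with unconditional convergence whenever $\{f_i,f_i'\}$ is unconditional. Lemma~\ref{lemamarcoaproximado} then produces a Schauder frame $\{f_i,F_i'\}$ with $F_i':=(\tilde S^{-1})^{*}\tilde f_i'$ (unconditional in the unconditional case), and seminormalization is preserved because $(\tilde S^{-1})^{*}$ is an isomorphism of $X'$, giving $\varepsilon a/\|\tilde S\|\le \|F_i'\|\le \|\tilde S^{-1}\|(M+\varepsilon b)$.

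The main obstacle in the plan is balancing the perturbation: it must be pointwise large enough to push any small-norm $f_i'$ away from zero, yet small enough in operator norm that $S+T$ remains invertible. This tension is resolved cleanly by the sign-picking procedure, since the perturbation sequence $(\sigma_i\varepsilon)_i$ lives in the $\ell_\infty$-ball of radius $\varepsilon$, so $\|T\|\le L\varepsilon$ is controlled uniformly in the individual choices of $\sigma_i$, while the pointwise lower bound $\varepsilon a$ on $\|\tilde f_i'\|$ comes from the elementary triangle-type inequality above.
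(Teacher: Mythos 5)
Your proposal is correct and follows essentially the same route as the paper: perturb the coordinate functionals by small multiples of the $g_i'$ so that the perturbed frame operator remains an isomorphism, and then invoke Lemma~\ref{lemamarcoaproximado}, noting that the isomorphism $(\tilde S^{-1})^{*}$ preserves seminormalization. The only real difference is in how the lower bound on the perturbed functionals is secured: the paper perturbs only those $f_i'$ whose norm falls below a threshold, by an amount large enough to dominate, whereas you perturb every $f_i'$ and pick a sign via $\max\left\lbrace \|u+v\|,\|u-v\|\right\rbrace\ge\|v\|$ --- both are valid, and your sign-selection avoids the case distinction.
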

\begin{proof}
Let $S$ be the frame operator of $\left\lbrace f_{i}, f_i' \right\rbrace_{i\in \mathbb{N}}$. Since $S: X\rightarrow X$ is an isomorphism,  there exists $0<\delta_0<1$ such that every $T: X\rightarrow X$  $||S-T||\le \delta_0$ is also an isomorphism. Note that $\left|\left|f_i'(g)f_i\right|\right|\le 2K\left(\left\lbrace f_i,f_i'\right\rbrace_{i\in\mathbb{N}}\right)\|g\|$ and $\left|\left|g_i'(g)f_i\right|\right|\le K_u\left(\left\lbrace f_i,g_i'\right\rbrace_{i\in\mathbb{N}}\right) \|g\| $ for all $g\in X$ and all $i\in\mathbb{N}$, so both $\{f_i'\}_{i\in\mathbb{N}}$ and $\{f_i\}_{i\in\mathbb{N}}$ are bounded. Set
\begin{align}
K_1=\frac{\max{\left\lbrace ||S||, K_u\left(\left\lbrace f_{i}, g_i' \right\rbrace_{i\in \mathbb{N}}\right), \sup\limits_{j\in\mathbb{N}}{\left\lbrace \max{\left\lbrace ||g_j'||,\frac{1}{||g_j'||},||f_j||, ||f_j'||\right\rbrace}\right\rbrace}\right\rbrace}}{\delta_0}.\label{cotagrande}
\end{align}
For each $i\in\mathbb{N}$, let $G_i'=f_i'+b_ig_i'$, where
\begin{numcases}{b_i=}
\frac{1}{K_1}& if $\left|\left|f_i'\right|\right|< \frac{1}{2K_1^2}$;\nonumber\\
0 & otherwise. \nonumber
\end{numcases}
For each $g\in X$, both $\sum\limits_{\substack{i=1}}^{\infty}b_ig_i'(g)f_i$ and $\sum\limits_{i=1}^{\infty}f_i'(g)f_i$ are convergent. Hence, we can define a linear operator $T:X\rightarrow X$ by
\begin{align*}
T(g)=\sum\limits_{i=1}^{\infty}G_i'\left(g\right)f_i \;\;\;\forall g\in X.
\end{align*}
Since $|b_i|\le K_1^{-1}$ for each $i\in \mathbb{N}$,  we obtain
\begin{align*}
\left|\left|S(g)-T(g)\right|\right|&=
\left|\left|\sum\limits_{i=1}^{\infty}f_i'\left(g\right)f_i-
\sum\limits_{i=1}^{\infty}G_i'\left(g\right)f_i\right|\right|=
\left|\left|\sum\limits_{\substack{i=1}}^{\infty}b_ig_i'\left(g\right)f_i\right|\right|\\
&\le
{K_u\left(\left\lbrace f_{i}, g_i' \right\rbrace_{i\in \mathbb{N}}\right)} \left|\left|(b_i)_i\right|\right|_{\infty}\left|\left|g\right|\right| \le \delta_0 K_1 K_1^{-1} \|g\|=\delta_0\|g\|\, .
\end{align*}
Therefore, $T$ is an isomorphism, so $\left\lbrace f_{i}, G_i' \right\rbrace_{i\in \mathbb{N}}\subseteq X\times X'$ is an approximate Schauder frame for $X$. For each $i\in \mathbb{N}$, let $F_i'=\left(T^{-1}\right)^{*}\left(G_i'\right)$. By Lemma \ref{lemamarcoaproximado}, $\left\lbrace f_{i}, F_i' \right\rbrace_{i\in \mathbb{N}}$ is a Schauder frame for $X$. Note that $\left\lbrace G_i'\right\rbrace_{i\in\mathbb{N}}$ is bounded because both $\left\lbrace f_i'\right\rbrace_{i\in\mathbb{N}}$ and $\left\lbrace g_i'\right\rbrace_{i\in\mathbb{N}}$ are bounded. Also, from (\ref{cotagrande}) we get that $K_1^{-1}\le ||g_i'||$ for each $i\in \mathbb{N}$, so our choice of $\left\lbrace b_i\right\rbrace_{i\in\mathbb{N}}$ gives
\begin{align}
\left|\left|G_i'\right|\right|\ge& \frac{1}{2K_1^2}\;\;\;\;\forall i\in\mathbb{N}. \nonumber
\end{align}
Hence, $\left\lbrace G_i'\right\rbrace_{i\in\mathbb{N}}$ is seminormalized, and thus so is $\left\lbrace F_i'\right\rbrace_{i\in\mathbb{N}}$. If $\left\lbrace f_{i}, f_i' \right\rbrace_{i\in \mathbb{N}}$ is unconditional, then for every $g\in X$, the series $\sum\limits_{i=1}^{\infty}f_i'(g)f_i$ converges unconditionally. Since $\sum\limits_{i=1}^{\infty}b_ig_i'(g)f_i$ also converges unconditionally, it follows that $\left\lbrace f_{i}, G_i' \right\rbrace_{i\in \mathbb{N}}$ is unconditional and, by Lemma \ref{lemamarcoaproximado}, so is $\left\lbrace f_{i}, F_i' \right\rbrace_{i\in \mathbb{N}}$.
\end{proof}

\begin{corollary}\label{corolarioseminormalizar}
Let $\left\lbrace f_{i}, f_i' \right\rbrace_{i\in \mathbb{N}}\subseteq X\times X'$ be an approximate Schauder frame for $X$ such that $\left\lbrace f_i\right\rbrace_{i\in\mathbb{N}}$ is bounded below. Suppose that $X$ contains a complemented copy of $\ell_q$ ($1\le q<\infty$) and that
there is $M_0>0$ such that
\begin{align}
\left|\left|\sum\limits_{i=1}^{n}a_if_{i}\right|\right|\le&M_0\left(\sum\limits_{i=1}^{n}|a_i|^{q}\right)^{\frac{1}{q}}, \label{cotaeleq}
\end{align}
for each $n\in\mathbb{N}$ and every  $\left\lbrace a_i\right\rbrace_{1\le i\le n}\subset \mathbb K$.
Then, there is a seminormalized sequence $\{F_i'\}_{i\in\mathbb{N}}\subseteq X'$ such that $\left\lbrace f_{i}, F_i' \right\rbrace_{i\in \mathbb{N}}\subseteq X\times X'$ is a Schauder frame for $X$. Moreover, $\{F_i'\}_{i\in\mathbb{N}}$ can be chosen so that $\left\lbrace f_{i}, F_i' \right\rbrace_{i\in \mathbb{N}}$ is unconditional if $\left\lbrace f_{i}, f_i' \right\rbrace_{i\in \mathbb{N}}$ is unconditional.
\end{corollary}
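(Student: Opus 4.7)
The plan is to deduce this from Lemma~\ref{lemaseminormalizar}. For that, it suffices to produce a seminormalized sequence $\{g_i'\}_{i\in\mathbb{N}}\subseteq X'$ such that $\sum_{i=1}^{\infty}g_i'(g)f_i$ converges unconditionally for every $g\in X$; the corollary will then follow by a direct application of that lemma.

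To construct $\{g_i'\}$, I would use the complemented copy of $\ell_q$ inside $X$. Fix a closed subspace $Y\subseteq X$, an isomorphism $J\colon\ell_q\to Y$ and a bounded projection $P\colon X\to Y$, and set $R:=J^{-1}\circ P\colon X\to\ell_q$. Since $R\circ J=\mathrm{id}_{\ell_q}$, taking adjoints yields $J^*\circ R^*=\mathrm{id}$ on the dual of $\ell_q$. For each $i$, let $g_i'\in X'$ be the functional $g\mapsto(Rg)_i$; equivalently, $g_i'=R^*\varphi_i$, where $\varphi_i$ is the $i$-th coordinate functional on $\ell_q$. Then $\|g_i'\|\le\|R\|$ and $\|g_i'\|=\|R^*\varphi_i\|\ge\|J^*\|^{-1}\|\varphi_i\|=\|J\|^{-1}$, so $\{g_i'\}_{i\in\mathbb{N}}$ is seminormalized.

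Unconditional convergence of $\sum g_i'(g)f_i$ is the point where (\ref{cotaeleq}) enters. For every $g\in X$, by construction $(g_i'(g))_{i\in\mathbb{N}}=Rg\in\ell_q$. Given $\epsilon>0$, choose $N$ with $\bigl(\sum_{i\ge N}|g_i'(g)|^q\bigr)^{1/q}<\epsilon/M_0$. Then for any finite $F\subseteq\{N,N+1,\dots\}$ and any signs $\{\varepsilon_i\}_{i\in F}\subseteq\{-1,1\}$, hypothesis (\ref{cotaeleq}) applied to the scalars $\varepsilon_i g_i'(g)$ gives
$$\left\|\sum_{i\in F}\varepsilon_i g_i'(g)f_i\right\|\le M_0\left(\sum_{i\in F}|g_i'(g)|^q\right)^{1/q}<\epsilon,$$
which is the Cauchy criterion for unconditional convergence in $X$.

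Having verified both hypotheses of Lemma~\ref{lemaseminormalizar}, that lemma immediately supplies the desired seminormalized sequence $\{F_i'\}_{i\in\mathbb{N}}\subseteq X'$ such that $\{f_i,F_i'\}_{i\in\mathbb{N}}$ is a Schauder frame for $X$, unconditional whenever $\{f_i,f_i'\}_{i\in\mathbb{N}}$ is. No step is particularly delicate; the one point that calls for care is the uniform lower bound on $\|g_i'\|$, and it is precisely there that the word \emph{complemented} in the hypothesis is used, via the identity $R\circ J=\mathrm{id}_{\ell_q}$.
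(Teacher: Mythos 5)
Your proof is correct and follows essentially the same route as the paper: in both arguments the seminormalized functionals are $P^*$ applied to the coordinate functionals of the copy of $\ell_q$ (your $g_i'=R^*\varphi_i$ coincides with the paper's $P^*(H_i')$), and \eqref{cotaeleq} is what converts the $\ell_q$-decay of $(g_i'(g))_i$ into unconditional convergence of $\sum_i g_i'(g)f_i$ before invoking Lemma~\ref{lemaseminormalizar}. The only cosmetic difference is that you verify the Cauchy criterion directly, whereas the paper factors the series through a bounded operator $T:Y\to X$ with $T(H_i)=f_i$.
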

\begin{proof}
Let $Y\subset X$ be a complemented copy of $\ell_q$ and $P:X\rightarrow Y$ be a bounded projection. Let $\{H_i\}_{i\in\mathbb{N}}$ be a basis of $Y$ equivalent to the unit vector basis of $\ell_q$, with coordinate functionals $\{H_i'\}_{i\in\mathbb{N}}\subseteq Y'$. By \eqref{cotaeleq}, there is a bounded operator $T:Y\to X$ such that $T(H_i)=f_i$. The basis $\{H_i\}_{i\in\mathbb{N}}$ is equivalent to the unit basis of $\ell_q$ and, then, the series $\sum_i H_i'(P(g)) H_i$ converges unconditionally for every $g\in X$. Since $T$ is bounded, the series $$\sum_i H_i'(P(g)) T(H_i)= \sum_i P^*(H_i'(g)) f_i$$ also converges unconditionally.
Since $P^*$ is bounded below, we can apply Lemma \ref{lemaseminormalizar} with  $g_i=P^*(H_i')$ to get the seminormalized sequence $\{F_i'\}_{i\in\mathbb{N}}\subseteq X'$ with the desired properties.
\end{proof}

\section{Unconditional Schauder frames of translates: definitions and general results.}\label{generalresults}
Fix a finite family $\left\lbrace g_k\right\rbrace_{1\le k\le k_0}\subset L_p(\mathbb{R}^d)$ (of different, nonzero functions), and a sequence $\left\lbrace \lambda_{i}\right\rbrace_{i\in\mathbb{N}}\subset \mathbb{R}^d$. We say that a sequence $\{f_i\}_{i\in\mathbb{N}}\subseteq L_p(\mathbb{R}^d)$ is a \emph{sequence of translates of $\left\lbrace g_k\right\rbrace_{1\le k\le k_0}$ by $\left\lbrace \lambda_{i}\right\rbrace_{i\in \mathbb{N}}\subseteq \mathbb{R}^d$} if, for each $i\in \mathbb N$, there exists $1\le k\le k_0$ such that $f_i$ is the translation of  $g_k$ by $\lambda_i$. In other words, there is a partition $\left\lbrace \Delta_k\right\rbrace_{1\le k\le k_0}$ of $\mathbb{N}$ into disjoint sets such that
\begin{align}
f_i=&T_{\lambda_i}g_k\;\;\;\forall i\in \Delta_k \;\;\forall 1\le k\le k_0. \label{condiciontraslacionesfinitas}
\end{align}
We may assume that each $\Delta_k$ is nonempty (if some $\Delta_k$ is empty, the corresponding $g_k$ is not used in the translations and we can remove it from the family).

\begin{definition}
  An approximate Schauder frame (or a Schauder frame) $\left\lbrace f_i,f_i'\right\rbrace_{i\in\mathbb{N}}$ for a subspace $X$ of $L_p\left(\mathbb{R}^d\right)$ is called an \emph{approximate Schauder frame of translates} (or a \emph{Schauder frame of translates})  if $\{f_i\}_{i\in\mathbb{N}}$ is a sequence of translates of finitely many functions by some sequence $\left\lbrace\lambda_{i}\right\rbrace_{i\in \mathbb{N}}\subseteq \mathbb{R}^d$.
\end{definition}

In this section, we study unconditional Schauder frames and approximate Schauder frames of translates of a single function or of finitely many functions in $L_p(\mathbb{R}^d)$, for $1<p<+\infty$. In particular, we focus on unconditional Schauder frames of translates with seminormalized coordinate functionals. We first introduce some definitions.
\begin{definition}\cite{Liu2012}\label{definicioncuasi}
An indexed family $\{\lambda_{i}\}_{i \in\Delta}\subseteq \mathbb{R}^d$ is \emph{uniformly separated} if there is $\delta>0$ such that
\begin{align}
 \left|\lambda_{i}-\lambda_{i'}\right|\ge \delta \;\;\;\forall i\not=i' \in \Delta, \nonumber
\end{align}
The set  is \emph{relatively uniformly separated} if there is a partition $\left\lbrace \Delta_k\right\rbrace_{1\le k\le m}$ of $\Delta$ such that $\{\lambda_{i}\}_{i \in\Delta_k}$ is uniformly separated for all $1\le k\le m$. \end{definition}

Note that all the elements of a uniformly separated family are different,  but this does not necessarily hold for relatively uniformly separated indexed sets.

\begin{remark}\label{lemmacaracterizarcuasiuniformementediscreto}
 If $\{\lambda_{i}\}_{i \in\Delta}\subseteq \mathbb{R}^d$ is relatively uniformly separated,  each compact set $Q\subset \mathbb{R}^d$  can contain only a finite number of $\lambda_i$'s. As a consequence, for any $t>0$ we can take a (further) partition
 $\left\lbrace A_k\right\rbrace_{1\le k\le k_0}$ of $\Delta$ such that
 \begin{align}
 \left|\lambda_i-\lambda_j\right| \ge  t \;\;\;\;\forall (i,j)\in A_k\times A_k: i\not=j\;\;\forall 1\le k\le k_0.\nonumber
\end{align}
\end{remark}

In   \cite[Proposition 1.8]{Odell2011}, the authors prove that if $\left\lbrace f_i,f_i'\right\rbrace_{i\in\mathbb{N}}$ is a biorthogonal system in $L_p\left(\mathbb{R}\right)^d$ with $\sup_{i\in \mathbb{N}}\|f_i\|\|f_i'\|<+\infty$ and $\left\lbrace f_i \right\rbrace_{i\in\mathbb{N}}$ is a sequence of translates of $f$ by a sequence $\left\lbrace \lambda_i\right\rbrace_{i\in\mathbb{N}}\subseteq \mathbb{R}^d$, then $\left\lbrace \lambda_i\right\rbrace_{i\in\mathbb{N}}$ is uniformly separated (this extends \cite[Theorem 1]{Olson1992}, where the corresponding result for Schauder bases is given). In \cite{Liu2012}, similar results are obtained for more general sequences. In particular, the following statement will be useful for our purposes.
\begin{theorem}\label{teoremadeLiu}\cite[Lemma 2.5 and Theorem 3.5 (i)]{Liu2012}
Let $q>1$, $K>0$ and $1<p<+\infty$. Suppose $\{f_i\}_{i\in\mathbb{N}}\subseteq X\subseteq L_p(\mathbb{R}^d)$ is a sequence of translates of $\{g_k\}_{1\le k\le k_0}$ by $\left\lbrace \lambda_j\right\rbrace_{j\in\mathbb{N}}$ and
\begin{align}
\sum\limits_{i=1}^{\infty}\left|h'\left(f_i\right)\right|^{q}\le K\left|\left|h'\right|\right|^{q}\;\;\;\;\forall h'\in X'.\nonumber
\end{align}
Then, $\left\lbrace \lambda_j\right\rbrace_{j\in\mathbb{N}}$ is relatively uniformly separated.
\end{theorem}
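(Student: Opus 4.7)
The plan is to assume $\{\lambda_j\}_{j\in\mathbb{N}}$ fails to be relatively uniformly separated and derive a contradiction with the summability hypothesis by producing functionals $h_n'\in X'$ whose $\ell_q$-pairings against $\{f_i\}$ blow up. The intuition is that when many $\lambda_j$'s cluster in a tiny ball, the corresponding translates of some $g_k$ look almost identical, so they all pair nontrivially with one and the same translated test function.

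First I would reduce to the case of a single generating function. Partitioning $\mathbb{N}=\bigsqcup_{k=1}^{k_0}\Delta_k$ by $\Delta_k=\{i:f_i=T_{\lambda_i}g_k\}$, the summability hypothesis is inherited by each $\Delta_k$, and it suffices to prove that each $\{\lambda_i\}_{i\in\Delta_k}$ is relatively uniformly separated (concatenating the $k_0$ finite partitions then yields one for all of $\mathbb{N}$). So I fix $k$, write $g=g_k$, and choose $h_0\in L_{p'}(\mathbb{R}^d)$ with $c:=\int h_0\,g\neq 0$, which exists since $g\neq 0$.

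The next step uses the elementary geometric characterization: a subset of $\mathbb{R}^d$ is relatively uniformly separated if and only if there exist $\delta>0$ and $N\in\mathbb{N}$ such that every open ball of radius $\delta$ contains at most $N$ of its points. The forward direction is immediate from the triangle inequality; the converse follows by greedy $N$-coloring of the graph whose edges join points at distance less than $\delta$, which has maximum degree at most $N-1$. If this characterization fails for $\{\lambda_i\}_{i\in\Delta_k}$, then for every $n\in\mathbb{N}$ there exists $\mu_n\in\mathbb{R}^d$ for which
$$I_n:=\{i\in\Delta_k:|\lambda_i-\mu_n|<1/n\}$$
satisfies $|I_n|\ge n$.

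Finally I would test against $h_n':=(T_{\mu_n}h_0)|_X\in X'$, noting that $\|h_n'\|_{X'}\le\|T_{\mu_n}h_0\|_{p'}=\|h_0\|_{p'}$. Translation invariance of Lebesgue measure gives, for $i\in I_n$,
$$h_n'(f_i)=\int T_{\mu_n}h_0\cdot T_{\lambda_i}g\,dx=\int h_0\cdot T_{\lambda_i-\mu_n}g\,dx,$$
and since $|\lambda_i-\mu_n|<1/n$, continuity of translation in $L_p(\mathbb{R}^d)$ gives $\|T_{\lambda_i-\mu_n}g-g\|_p\to 0$ uniformly over $i\in I_n$ as $n\to\infty$. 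Hence $h_n'(f_i)\to c$ uniformly on $I_n$, so for $n$ sufficiently large $|h_n'(f_i)|\ge|c|/2$ for all $i\in I_n$, and the hypothesis forces
$$n\left(\tfrac{|c|}{2}\right)^q\le\sum_{i\in I_n}|h_n'(f_i)|^q\le\sum_{i\in\mathbb{N}}|h_n'(f_i)|^q\le K\|h_n'\|^q\le K\|h_0\|_{p'}^q,$$
the desired contradiction as $n\to\infty$. The main subtlety, and essentially the only one, is the uniformity of the convergence $T_{\lambda_i-\mu_n}g\to g$ across $i\in I_n$, which is a consequence of the standard fact that $\sup_{|\lambda|<\varepsilon}\|T_\lambda g-g\|_p\to 0$ as $\varepsilon\to 0$.
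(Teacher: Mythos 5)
The paper itself offers no proof of this theorem: it is quoted directly from Liu--Liu \cite{Liu2012} (their Lemma 2.5 together with Theorem 3.5(i)), so the only comparison available is with the cited source. Your argument is correct and self-contained, and in substance it reconstructs that proof. The reduction to a single generator works because the summability hypothesis passes to the subsum over each $\Delta_k$, and a finite union of relatively uniformly separated families is again relatively uniformly separated. Your ball-counting characterization (at most $N$ indices, counted with multiplicity, with $\lambda_i$ in any ball of radius $\delta$) is precisely finiteness of the upper Beurling density, which is what Liu--Liu's Lemma 2.5 identifies with relative uniform separation; the greedy-colouring proof of the direction you actually need is fine, since the proximity graph has maximum degree at most $N-1$ and a countable graph of bounded degree is greedily $N$-colourable, each colour class being $\delta$-separated. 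The contradiction step --- testing against $h_n'=\left(T_{\mu_n}h_0\right)\big|_X$, with $\|h_n'\|_{X'}\le\|h_0\|_{p'}$, and using the uniform modulus $\sup_{|\lambda|\le 1/n}\|T_\lambda g-g\|_p\to 0$ to force $|h_n'(f_i)|\ge |c|/2$ on at least $n$ indices, whence $n\left(|c|/2\right)^q\le K\|h_0\|_{p'}^q$ --- is the same mechanism as Theorem 3.5(i) of \cite{Liu2012}. Two small points deserve a sentence in a written-up version: that repeated $\lambda_i$'s must be counted with multiplicity in $I_n$ (consistent with the indexed-family reading of Definition~\ref{definicioncuasi}, under which a value repeated infinitely often already violates relative uniform separation), and that $h_0$ with $\int h_0 g\ne 0$ exists by Hahn--Banach because $g\ne 0$. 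Neither is a gap.
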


The following  lemma is probably known, but we could not find a reference for it.
\begin{lemma}\label{lemmadual}
Let $X$ be a reflexive Banach space, and let $\left\lbrace f_i,f_i'\right\rbrace_{i\in\mathbb{N}}$ be a sequence in $X\times X'$. The following are equivalent:
\begin{enumerate}[{(\roman*)}]
\item \label{desdeeldual}$\sum\limits_{i=1}^{\infty}h'(f_i)f_i'$ converges unconditionally for each $h'\in X'$.
\item \label{haciaeldual}$\sum\limits_{i=1}^{\infty}f_i'(g)f_i$ converges unconditionally for each $g\in X$.
\end{enumerate}
\end{lemma}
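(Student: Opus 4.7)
The plan is to recognize that conditions (i) and (ii) are partial-sum transposes of one another, turn each into the weakly unconditionally Cauchy (WUC) property of the corresponding series in $X'$ or $X$, and then invoke the Bessaga--Pe\l czy\'nski theorem together with reflexivity to upgrade WUC to unconditional convergence.

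Assuming (ii), the hypothesis together with the uniform boundedness principle (as noted after the definition of $K_u$) ensures that the unconditional frame constant $K_u := K_u(\{f_i, f_i'\}_{i\in\mathbb{N}})$ is finite. For every finite $F \subset \mathbb{N}$ and every choice of signs $(\epsilon_i)_{i \in F} \in \{-1, +1\}^F$, the finite-rank operator
\[
U_{F,\epsilon}: X \to X, \qquad U_{F,\epsilon}(g) = \sum_{i \in F} \epsilon_i f_i'(g) f_i,
\]
therefore satisfies $\|U_{F,\epsilon}\| \le K_u$. A direct computation shows that its Banach-space transpose acts on $h' \in X'$ as
\[
U_{F,\epsilon}^{*}(h') = \sum_{i \in F} \epsilon_i h'(f_i)\, f_i',
\]
and since $\|U_{F,\epsilon}^{*}\| = \|U_{F,\epsilon}\| \le K_u$, the partial sums of $\sum_i h'(f_i) f_i'$ over all finite sign choices are uniformly bounded by $K_u\|h'\|$. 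This is exactly the criterion for $\sum_i h'(f_i) f_i'$ to be WUC in $X'$.

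Reflexivity of $X$ passes to $X'$, so $X'$ contains no isomorphic copy of $c_0$. By the Bessaga--Pe\l czy\'nski theorem, every WUC series in $X'$ converges unconditionally, which yields (i).

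The converse (i) $\Rightarrow$ (ii) follows by applying the same argument to the pair $\{(f_i', f_i)\}_{i \in \mathbb{N}}$ viewed in $X' \times X''$, using reflexivity to identify $X''$ with $X$; the roles of $f_i$ and $f_i'$ swap and the transposition-plus-Bessaga--Pe\l czy\'nski chain produces (ii). I do not expect any significant obstacle; the only subtleties are to use the sign-partial-sum characterization of WUC correctly and to note that both $X$ and $X'$ are free of $c_0$ copies because of reflexivity.
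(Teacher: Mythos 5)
Your proof is correct and follows essentially the same route as the paper's: both reduce the claim to showing that the transposed series is weakly unconditionally Cauchy and then invoke the Bessaga--Pe\l czy\'nski theorem together with reflexivity (hence no copy of $c_0$ in $X$ or $X'$). The only cosmetic difference is that you establish the WUC property via the uniform boundedness principle and the norm equality $\lVert U_{F,\epsilon}^{*}\rVert=\lVert U_{F,\epsilon}\rVert$, whereas the paper verifies it pointwise by estimating $\sum_i\lvert h'(f_i'(g)f_i)\rvert$ directly with bounded multipliers.
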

\begin{proof}
\ref{desdeeldual}$\Longrightarrow$\ref{haciaeldual}: Fix $g\in X$. For each $h'\in X'$, choose $\left(a_i(h')\right)_{i\in\mathbb{N}}\in B_{\ell_{\infty}}$ so that $a_i(h')h'(f_i)f_i'(g)=|h'(f_i)f_i'(g)|$. We have
\begin{align}
\sum\limits_{i=1}^{\infty}\left|h'\left(f_i'(g)f_i\right)\right|=
&\left|\sum\limits_{i=1}^{\infty}a_i(h')h'(f_i)f_i'(g)\right|=
\left|\left(\sum\limits_{i=1}^{\infty}a_i(h')h'(f_i)f_i'\right)(g)\right|<+\infty,
\end{align}
for all $h'\in X'$. Since $X$ does not contain a subspace isomorphic to $\textbf{c}_0$, by a theorem of Bessaga and Pe{\l}czy{\'n}ski the series $\sum\limits_{i=1}^{\infty}f_i'(g)f_i$ converges unconditionally  (see  \cite[Theorem 5]{Bessaga1958} or \cite[Theorem 6.4.3]{Kadets1997}). \\
\ref{haciaeldual} $\Longrightarrow$\ref{desdeeldual}: Since $X$ is reflexive, this is proven by essentially the same argument.
\end{proof}
\smallskip
If $\left\lbrace f_i,f_i'\right\rbrace_{i\in\mathbb{N}}$ is an unconditional approximate Schauder frame for a reflexive Banach space $X$, by the previous lemma $\sum\limits_{i=1}^{\infty}f'(f_i)f_i'$ converges unconditionally for each $f'\in X'$.
Thus, if $S:X\rightarrow X$ is the frame operator of $\left\lbrace f_i,f_i'\right\rbrace_{i\in\mathbb{N}}$, then for every $f'\in X'$ we have
$$
S^{*}f'(f)=f'\left(S\left(f\right)\right)=f'\left(\sum\limits_{i=1}^{\infty}f_i'(f)f_i\right)=\left(\sum\limits_{i=1}^{\infty}f'(f_i)f_i'\right)(f)\;\;\;\;\forall f\in X.
$$
Hence, $S^{*}f'=\sum\limits_{i=1}^{\infty}f'(f_i)f_i'$. Since $S^{*}:X'\rightarrow X'$ is an isomorphism, it follows that $\left\lbrace f_i',f_i\right\rbrace_{i\in\mathbb{N}}$ is an approximate unconditional Schauder frame for $X'$ with frame operator $S^{*}$. For unconditional Schauder frames, this result follows at once from \cite[Theorem 1.4, Theorem 2.5]{Carando2009} and \cite[Proposition 2.2, Remark 3.2]{Carando2011}. It was also proven in \cite[Theorem 5.1]{Liu2010}, under the assumption that for each $i\in\mathbb{N}$, $\left|\left|f_i'\big|_{\left[f_j:j\ge n\right]}\right|\right|\rightarrow 0$ as $n\to +\infty$.

\begin{corollary}\label{propositionmarcoincondicionalcud}
Let $1 <p< +\infty$. Suppose $\left\lbrace f_i, f_i' \right\rbrace_{i\in \mathbb{N}}\subseteq X\times X'$ is an unconditional approximate Schauder frame for $X\subseteq L_p(\mathbb{R}^d)$, and $\{f_i\}_{i\in \mathbb{N}}$ is a sequence of translates of $\{g_k\}_{1\le k\le k_0}$ by $\left\lbrace \lambda_i\right\rbrace_{i\in\mathbb{N}}$. If $\left\lbrace f_i'\right\rbrace_{i\in\mathbb{N}}$ is seminormalized, then $\left\lbrace \lambda_i\right\rbrace_{i\in \mathbb{N}}$ is relatively uniformly separated.
\end{corollary}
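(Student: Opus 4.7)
The plan is to reduce the statement to Theorem \ref{teoremadeLiu}. To apply that theorem I need some $q>1$ and $K>0$ such that
\[
\sum_{i=1}^{\infty}|h'(f_i)|^{q}\le K\|h'\|^{q}\quad\text{for every }h'\in X',
\]
and the conclusion will then follow at once.

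First I would exploit the fact that $1<p<+\infty$ so that $X\subseteq L_p(\mathbb{R}^d)$ is reflexive. By hypothesis, $\sum_{i}f_i'(g)f_i$ converges unconditionally for every $g\in X$, so Lemma \ref{lemmadual} delivers the dual statement: $\sum_{i}h'(f_i)f_i'$ converges unconditionally for every $h'\in X'$.

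Next, since $\{f_i'\}_{i\in\mathbb{N}}$ is seminormalized, it is in particular bounded below, so I am in a position to invoke Remark \ref{lemaacodadoabajovaaeleq}(b) in the dual space $X'$ (using that $X'$ embeds as a subspace of $L_{p'}(\mathbb{R}^d)$). This produces a bounded linear operator $\Theta_{q}\colon X'\to\ell_{q}$ with $\Theta_{q}(h')=(h'(f_i))_{i\in\mathbb{N}}$, where $q=\max\{2,p'\}$. Taking $K=\|\Theta_q\|^{q}$ yields precisely the required summability estimate, and since $q\ge 2>1$, Theorem \ref{teoremadeLiu} applies and gives the relative uniform separation of $\{\lambda_i\}_{i\in\mathbb{N}}$.

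I do not anticipate a serious obstacle: the preliminary machinery—reflexivity, the duality of unconditionally convergent expansions (Lemma \ref{lemmadual}), Orlicz-type summability for frames with bounded-below functionals (Remark \ref{lemaacodadoabajovaaeleq}), and Liu's separation criterion—does all the real work, so the corollary is essentially a bookkeeping assembly of these tools. The only mild subtlety worth double-checking is that Remark \ref{lemaacodadoabajovaaeleq}(b) is indeed available here, i.e.\ that the ambient ``$L_{p'}$-type'' hypothesis on the dual side is legitimate: this is correct because $X$ reflexive and $X\subseteq L_p(\mathbb{R}^d)$ make $X'$ a quotient of $L_{p'}(\mathbb{R}^d)$, and the Orlicz cotype estimate underlying the remark only uses the cotype $q=\max\{2,p'\}$ of $L_{p'}(\mathbb{R}^d)$, which is inherited by quotients.
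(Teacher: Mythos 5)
Your argument is correct and is essentially the paper's own proof: reflexivity plus Lemma \ref{lemmadual} gives unconditional convergence of $\sum_i h'(f_i)f_i'$, the bounded-below hypothesis then yields the bounded operator $\Theta_q\colon X'\to\ell_q$ with $q=\max\{2,p'\}$ via Remark \ref{lemaacodadoabajovaaeleq2}(b), and Theorem \ref{teoremadeLiu} concludes. Only a cosmetic slip: $X'$ is a \emph{quotient} of $L_{p'}(\mathbb{R}^d)$, not a subspace, but as you note yourself the relevant cotype $\max\{2,p'\}$ passes to quotients (the paper instead gets it directly from the type of $X$), so nothing is affected.
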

\begin{proof} By the previous comments and Remark
\ref{lemaacodadoabajovaaeleq2}, $h'\rightarrow \left(h'(f_i)\right)_{i\in\mathbb{N}}$ defines a bounded linear operator from $X'$ into $\ell_q$, where $q=\max\{2,p'\}$. Then, Theorem \ref{teoremadeLiu} shows that $\left\lbrace \lambda_i\right\rbrace_{i\in \mathbb{N}}$ is relatively uniformly separated.
\end{proof}

 As a consequence of our next lemma and its corollary,
for unconditional approximate Schauder frames of relatively uniformly separated  translates with seminormalized coordinates, we can strengthen some conclusions of Remark \ref{lemaacodadoabajovaaeleq}. The proof of the lemma is a variant of the proof of \cite[Lemma 2, Section~3]{Johnson1974}.
\begin{lemma}\label{lemaincondicionallp}
Let $1\le p<+\infty$, and let $\left\lbrace f_{i} \right\rbrace_{i\in \mathbb{N}}\subseteq L_p(\mathbb{R}^d)$. Suppose there is a partition $\left\lbrace A_k\right\rbrace_{1\le k\le k_0}$ of $\mathbb{N}$, $\epsilon>0$ and measurable sets $\left\lbrace D_{i,k}\right\rbrace_{\substack{1\le k\le k_0\\i\in A_k}}\subseteq \mathbb{R}^d$ such that
\begin{align}
D_{i,k}\cap D_{i',k}=&\emptyset \;\;\;\;\text{for } i,i'\in A_k,\  i\not=i'\;\;\text{ and }1\le k\le k_0;\nonumber\\
\int_{D_{i,k}}\left|f_i(x)\right|^{p}dx\ge& \epsilon \;\;\;\;\text{for } i\in A_k \;\;\text{ and } 1\le k\le k_0.\nonumber
\end{align}
If  $\sum\limits_{i=1}^{\infty}a_if_i$ converges unconditionally, then $\sum\limits_{i=1}^{\infty}\left|a_i\right|^p<\infty$.
\end{lemma}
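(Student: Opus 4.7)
The plan is to combine Rademacher averaging (which is available thanks to unconditional convergence) with a \emph{pointwise} application of Khintchine's inequality, so that the disjointness of $\{D_{i,k}\}_{i\in A_k}$ can be used to extract one diagonal term $|a_j f_j|^p$ at a time. Since $\{A_k\}_{1\le k\le k_0}$ is a finite partition of $\mathbb{N}$, it suffices to prove $\sum_{i\in A_k}|a_i|^p<+\infty$ for each fixed $k$; summing the $k_0$ resulting bounds then gives the full conclusion.

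Fix $k\in\{1,\dots,k_0\}$ and a finite $F\subseteq A_k$. By the unconditional convergence of $\sum a_if_i$ and a standard uniform boundedness argument, there is a constant $M>0$ (independent of $F$ and of the sign pattern) such that $\|\sum_{i\in F}\varepsilon_i a_if_i\|_p\le M$ for every $\varepsilon\in\{-1,1\}^F$. Replacing $\varepsilon_i$ by Rademacher functions $r_i(t)$, raising to the $p$-th power, integrating over $[0,1]$, and applying Fubini yields
$$\int_{\mathbb{R}^d}\int_0^1\Bigl|\sum_{i\in F}r_i(t)\,a_i f_i(x)\Bigr|^{p}\,dt\,dx\;\le\;M^{p}.$$

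Next, by the lower Khintchine inequality, for every $x\in\mathbb{R}^d$ and every $j\in F$,
$$\int_0^1\Bigl|\sum_{i\in F}r_i(t)\,a_i f_i(x)\Bigr|^{p}dt\;\ge\;A_p\Bigl(\sum_{i\in F}|a_if_i(x)|^{2}\Bigr)^{p/2}\;\ge\;A_p\,|a_j f_j(x)|^{p}.$$
Restricting to $x\in D_{j,k}$, integrating, and summing over $j\in F$, using that the $D_{j,k}$ are pairwise disjoint for $j\in A_k$ and that $\int_{D_{j,k}}|f_j|^p\ge\epsilon$, we obtain
$$M^{p}\;\ge\;\sum_{j\in F}\int_{D_{j,k}}\int_0^1\Bigl|\sum_{i\in F}r_i(t)\,a_i f_i(x)\Bigr|^{p}dt\,dx\;\ge\;A_p\,\epsilon\sum_{j\in F}|a_j|^{p}.$$
Since this holds uniformly in finite $F\subseteq A_k$, we conclude $\sum_{i\in A_k}|a_i|^p\le M^{p}/(A_p\epsilon)<+\infty$, and summing over $k$ finishes the proof.

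The central conceptual step is the pointwise use of the Khintchine lower bound: this is what converts the awkward cross terms $\sum_{i\ne j}\varepsilon_i a_i f_i(x)$ (which need not vanish on $D_{j,k}$) into the clean diagonal contribution $A_p|a_j f_j(x)|^{p}$, after which the disjointness hypothesis is immediately exploitable. No serious obstacle is anticipated beyond choosing the right randomization.
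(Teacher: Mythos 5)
Your proof is correct, and it follows the same overall strategy as the paper's: randomize with Rademacher signs, integrate over $[0,1]\times\mathbb{R}^d$, extract the diagonal term $|a_jf_j(x)|^p$ pointwise, and then use the disjointness of the sets $\{D_{j,k}\}_{j\in A_k}$ within each block to sum these contributions against the uniform bound $M^p$ coming from unconditionality. The one place where you diverge is the mechanism for isolating the diagonal term: you invoke the lower Khintchine inequality to pass to the square function $\bigl(\sum_{i}|a_if_i(x)|^2\bigr)^{p/2}$ and then discard all but one summand, at the cost of the constant $A_p$. The paper instead writes $a_jf_j(x)=\int_0^1\sum_{i}a_if_i(x)r_i(t)r_j(t)\,dt$ using the orthonormality of the Rademacher system and then applies Jensen's (H\"older's) inequality together with $|r_j(t)|^p\equiv 1$, which achieves the same diagonal extraction with constant $1$ and without appealing to Khintchine. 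Both routes are valid for all $1\le p<+\infty$; yours is perhaps more recognizable as a square-function argument and would also yield the stronger intermediate bound involving $\bigl(\sum_i|a_if_i(x)|^2\bigr)^{p/2}$ if one needed it, while the paper's is marginally more elementary and gives the cleaner final estimate $\sum_i|a_i|^p\le k_0K^p/\epsilon$.
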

\begin{proof}
Since $\sum\limits_{i=1}^{\infty}a_if_i$ converges unconditionally, there is $K>0$ such that
\begin{align*}
\left|\left|\sum\limits_{i=1}^{n}c_ia_if_i\right|\right|\le& K\left|\left|\textbf{\emph{c}}\right|\right| \;\;\;\text{for all } \textbf{\emph{c}}=\left(c_i\right)_{i\in\mathbb{N}} \in \ell_{\infty},\; n\in\mathbb{N}.
\end{align*}

As a consequence, for fixed $1\le k\le k_0$ and $n\in \mathbb N$ we have, using H\"older's inequality in the third step,
\begin{align*}
\epsilon\sum\limits_{\substack{j\in A_k\\1\le j\le n}}\left|a_j\right|^p & \le \sum\limits_{\substack{j\in A_k\\1\le j\le n}}\int_{D_{j,k}}\left|a_jf_j(x)\right|^{p}dx \\ &= \sum\limits_{j\in A_k}\int_{D_{j,k}}\Big|\int_{0}^{1}\sum\limits_{\substack{1\le i\le n\\i\in A_k}}a_if_i(x)r_i(t)r_j(t)dt\Big|^{p}dx \\
& \le \sum\limits_{j\in A_k}\int_{D_{j,k}} \int_{0}^{1}\Big|\sum\limits_{\substack{1\le i\le n\\i\in A_k}}a_if_i(x)r_i(t)\Big|^p|r_j(t)|^p dt\, dx \\
& = \sum\limits_{j\in A_k}\int_{D_{j,k}} \int_{0}^{1}\Big|\sum\limits_{\substack{1\le i\le n\\i\in A_k}}a_if_i(x)r_i(t)\Big|^p dt\, dx.
\end{align*}
Since $D_{j,k}\cap D_{j',k}$ is empty for $j\ne j'$, this last expression is bounded by
\color{black}

\begin{align*}
\int_{\mathbb{R}^d}\int_{0}^{1}\Big|\sum\limits_{\substack{1\le i\le n\\i\in A_k}}r_i(t)a_if_i(x)\Big|^p dt\, dx = \int_{0}^{1}\Big\|\sum\limits_{\substack{1\le i\le n\\i\in A_k}}r_i(t)a_if_i\Big\|^{p}dt\le K^p,
\end{align*}
Therefore,
$$
\sum\limits_{i=1}^{\infty}\left|a_i\right|^p=\sum\limits_{k=1}^{k_0}\sum\limits_{\substack{i\in A_k}}\left|a_i\right|^p\le \frac{k_0K^p}{\epsilon}<+\infty. \qedhere
$$

\end{proof}

\begin{corollary}\label{corolariooperadoralp}
Let $1\le p<+\infty$, and let $\left\lbrace f_{i}, f_i' \right\rbrace_{i\in \mathbb{N}}$ be an unconditional approximate Schauder frame of translates for $X\subseteq L_p(\mathbb{R}^d)$ by a relatively uniformly separated sequence.
Then there exists a bounded linear operator $\Psi_p:X\rightarrow \ell_p$ given by
\begin{align}
\Psi_p(g)=&\left(f_i'(g)\right)_{i\in\mathbb{N}} \;\;\;\;\forall g\in X.\label{continuoaelepe}
\end{align}
\end{corollary}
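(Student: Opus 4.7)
The plan is to verify the hypotheses of Lemma \ref{lemaincondicionallp} with $a_i = f_i'(g)$ and extract the bound from its proof. The unconditional frame condition tells us that $\sum_{i=1}^\infty f_i'(g) f_i$ converges unconditionally for every $g \in X$, with
$$\Big\|\sum_{i=1}^n c_i f_i'(g) f_i\Big\| \le K_u(\{f_i,f_i'\}) \|g\| \|\mathbf{c}\|_\infty,$$
so if we can manufacture sets $D_{i,k}$ and a partition of $\mathbb{N}$ satisfying the disjointness and lower-bound conditions of Lemma \ref{lemaincondicionallp}, we will conclude both $(f_i'(g))_i \in \ell_p$ and a uniform bound of the form $\|\Psi_p(g)\|_p^p \le (k_0/\epsilon) K_u^p \|g\|^p$, which is exactly what we need.

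The first step is to refine the partition. Write $f_i = T_{\lambda_i} g_{k(i)}$ where $k(i) \in \{1,\dots,k_0\}$. Since each $g_k$ is a nonzero element of $L_p(\mathbb{R}^d)$, pick a radius $R>0$ large enough that $\int_{B(0,R)} |g_k(x)|^p\,dx \ge \epsilon$ for all $k=1,\dots,k_0$, for some uniform $\epsilon>0$. By hypothesis $\{\lambda_i\}_{i\in\mathbb{N}}$ is relatively uniformly separated, so by Remark \ref{lemmacaracterizarcuasiuniformementediscreto} applied with $t=2R$, we can find a partition $\{A_j\}_{1\le j\le j_0}$ of $\mathbb{N}$ with $|\lambda_i - \lambda_{i'}| \ge 2R$ for all distinct $i,i' \in A_j$. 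By further refining this partition (intersecting with the sets $\{i:k(i)=k\}$), we may assume each $A_j$ consists of indices sharing a common generator $g_{k(j)}$, while keeping the $2R$-separation property inside each $A_j$.

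Next, define $D_{i,j} := \lambda_i + B(0,R)$ for $i \in A_j$. The $2R$-separation of $\{\lambda_i\}_{i\in A_j}$ makes these balls pairwise disjoint within each $A_j$, and
$$\int_{D_{i,j}} |f_i(x)|^p\,dx = \int_{B(0,R)} |g_{k(j)}(x)|^p\,dx \ge \epsilon.$$
So the hypotheses of Lemma \ref{lemaincondicionallp} are satisfied for the partition $\{A_j\}_{1\le j\le j_0}$. Fixing $g \in X$ and applying Lemma \ref{lemaincondicionallp} to the unconditionally convergent series $\sum_i f_i'(g) f_i$ (with constant $K = K_u(\{f_i,f_i'\})\|g\|$) shows that $\sum_i |f_i'(g)|^p \le j_0 K_u^p \|g\|^p/\epsilon$. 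Hence $\Psi_p$ is well defined and bounded; linearity is immediate.

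I do not anticipate a serious obstacle. The only mild subtlety is making sure the refined partition simultaneously enforces both the ``same generator'' property (so the lower bound $\epsilon$ can be taken uniform on each class) and the geometric separation property (so the translated balls $D_{i,j}$ are disjoint); this is handled simply by intersecting the two partitions, which preserves relative uniform separation.
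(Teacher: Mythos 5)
Your proposal is correct and follows essentially the same route as the paper's proof: both reduce to Lemma \ref{lemaincondicionallp} by using Remark \ref{lemmacaracterizarcuasiuniformementediscreto} to split $\mathbb{N}$ into finitely many classes on which the translated copies of a fixed set carrying mass at least $\epsilon$ are pairwise disjoint (you use balls and a refinement by generator; the paper uses a cube and indexes by $A_m\cap\Delta_k$, which is the same refinement). The only minor difference is that you extract the explicit bound $k_0K_u^p\|g\|^p/\epsilon$ from the lemma's proof, whereas the paper concludes boundedness of $\Psi_p$ by a uniform boundedness argument; both are valid.
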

\begin{proof} Suppose $\{f_i\}_{i\in \mathbb{N}}$ is a sequence of translates of $\{g_k\}_{1\le k\le k_0}$ by $\left\lbrace \lambda_i\right\rbrace_{i\in\mathbb{N}}$.
Since all $g_k\ne 0$ for  $1\le k\le k_0$, there exists $\epsilon>0$ and a cube $Q\subseteq \mathbb{R}^d$ such that
\begin{align*}
\int_{Q}\left|g_k(x)\right|^pdx\ge&\epsilon\;\;\;\;\forall 1\le k\le k_0.
\end{align*}
By Remark \ref{lemmacaracterizarcuasiuniformementediscreto} there is a partition $\{A_m\}_{1\le m\le m_0}$ of $\mathbb{N}$ such that
\begin{align}
Q+\lambda_i\cap Q+\lambda_j=&\emptyset \;\;\;\forall (i,j)\in A_m\times A_m: i\not=j \;\forall 1\le m\le m_0. \nonumber
\end{align}
Let $\{\Delta_k\}_{1\le k\le k_0}$ be a partition of $\mathbb{N}$ such that
\begin{align*}
f_i=&T_{\lambda_i}g_k\;\;\;\forall i\in \Delta_k \;\;\forall 1\le k\le k_0,
\end{align*}
and let
\begin{align*}
D_{i,m,k}=&Q+\lambda_{i} \;\;\;\;\forall i\in A_m\cap \Delta_k \;\;\forall 1\le m\le m_0\;\;\forall 1\le k\le k_0.
\end{align*}
Fix $1\le k\le k_0$ and $1\le m\le m_0$. We have
\begin{align*}
D_{i,m,k}\cap D_{i',m,k}=&\emptyset \;\;\;\;\forall (i,i')\in A_m\cap \Delta_k\times A_m\cap \Delta_k :i\not=i';\\
\int_{D_{i,m,k}}\left|f_i(x)\right|^{p}dx=&\int_{Q+\lambda_i}\left|T_{\lambda_{i}}g_k(x)\right|^{p}dx=\int_{Q}\left|g_k(x)\right|^{p}dx\ge \epsilon \;\;\;\forall i\in A_m\cap \Delta_k.
\end{align*}
By Lemma \ref{lemaincondicionallp}, this implies that $\sum\limits_{i=1}^{\infty}\left|f_i'(g)\right|^p<+\infty$ for each $g\in X$. A uniform boundedness argument completes the proof.
\end{proof}
Note that for the case $2\le p<+\infty$, the result of Lemma \ref{lemaincondicionallp} follows immediately from Proposition \ref{TeoremadeOrliczextendido}, but it is convenient for our purposes to state the lemma and its corollary for all $1\le p <+\infty$.

\smallskip
Now we prove the main result of this section.
\begin{proposition}\label{proposicionisomorfoaelep}
Let $\left\lbrace f_{i}, f_i' \right\rbrace_{i\in \mathbb{N}}$ be an unconditional approximate Schauder frame of translates for $X\subseteq L_p(\mathbb{R}^d)$, with $1<p<+\infty$, by $\left\lbrace \lambda_i\right\rbrace_{i\in\mathbb{N}}$. If there exists $M_0>0$ such that
\begin{align*}
\left|\left|\sum\limits_{i=1}^{n}a_if_{i}\right|\right|\le&M_0\left(\sum\limits_{i=1}^{n}|a_i|^{p}\right)^{\frac{1}{p}}
\end{align*}
for each $n\in\mathbb{N}$ and every $\{a_i\}_{1\le i\le n}\subset \mathbb K$, then, the following assertions hold.
\begin{enumerate}[label=\rm{(\alph*)}]
\item The sequence $\left\lbrace \lambda_i\right\rbrace_{i\in\mathbb{N}}$ is relatively uniformly separated.
\item The linear operator $g\rightarrow \left(f_i'(g)\right)_{i\in\mathbb{N}}$ is an isomorphism between $X$ and a complemented subspace of $\ell_p$. In particular, $X$ is isomorphic to $\ell_p$.
\item \label{seminormalizados29}There is a seminormalized sequence $\left\lbrace F_i'\right\rbrace_{i\in\mathbb{N}}$ such that $\left\lbrace f_{i}, F_i' \right\rbrace_{i\in \mathbb{N}}$ is an unconditional Schauder frame for $X$.

\end{enumerate}
\end{proposition}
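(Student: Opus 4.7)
I would prove the three claims in order, combining the hypothesis with results already established in the paper and one classical fact about $\ell_p$. For (a), the key observation is that the hypothesis $\bigl\|\sum_{i=1}^{n} a_i f_i\bigr\|\le M_0 \bigl(\sum|a_i|^p\bigr)^{1/p}$ is equivalent, by duality, to the estimate
\begin{equation*}
\sum_{i=1}^{\infty} |h'(f_i)|^{p'} \le M_0^{p'}\|h'\|^{p'} \quad \text{for every } h'\in X',
\end{equation*}
obtained by pairing $h'$ with a finite linear combination of the $f_i$ and taking a supremum over unit vectors of $\ell_p$. This is precisely the hypothesis of Theorem \ref{teoremadeLiu} with $q=p'>1$, so $\{\lambda_i\}_{i\in\mathbb{N}}$ is relatively uniformly separated.

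For (b), I would combine (a) with Corollary \ref{corolariooperadoralp} to obtain a bounded operator $\Psi_p:X\to\ell_p$, $g\mapsto (f_i'(g))_{i\in\mathbb{N}}$. On the other hand, the hypothesis of the proposition provides a bounded operator $T:\ell_p\to X$ sending $e^{(i)}$ to $f_i$, so that $T(\mathbf{a})=\sum_i a_i f_i$ for $\mathbf{a}\in\ell_p$. A direct computation then gives $T\circ \Psi_p = S$, the frame operator. Since $S$ is an isomorphism, $\Psi_p$ must be bounded below and hence an isomorphism onto its image $\Psi_p(X)$. Next I would verify that $P:=\Psi_p\circ S^{-1}\circ T:\ell_p\to\ell_p$ is a bounded projection onto $\Psi_p(X)$: its range lies in $\Psi_p(X)$ by construction, and for $y=\Psi_p(g)\in\Psi_p(X)$ one has $P(y)=\Psi_p S^{-1}T\Psi_p(g)=\Psi_p S^{-1} S(g) = y$. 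Thus $X$ would be isomorphic to a complemented subspace of $\ell_p$, and since $X$ is infinite-dimensional and $1<p<+\infty$, Pe{\l}czy{\'n}ski's classical theorem on complemented subspaces of $\ell_p$ gives $X\cong \ell_p$.

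Finally, for (c), note that $\{f_i\}_{i\in\mathbb{N}}$ is automatically bounded below: each $g_k$ is nonzero and there are only finitely many of them, so $\|f_i\|=\|g_k\|\ge\min_{1\le k\le k_0}\|g_k\|>0$ whenever $i\in\Delta_k$. By (b), $X$ itself is a (trivially) complemented copy of $\ell_p$ inside $X$, and the hypothesis of the proposition is exactly condition \eqref{cotaeleq} of Corollary \ref{corolarioseminormalizar} with $q=p$. Applying that corollary then delivers the seminormalized sequence $\{F_i'\}_{i\in\mathbb{N}}$ such that $\{f_i, F_i'\}_{i\in\mathbb{N}}$ is an unconditional Schauder frame for $X$. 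The most delicate point is the bookkeeping in (b) --- in particular, the identification $T\circ\Psi_p=S$, the verification that $P$ really projects onto $\Psi_p(X)$, and the invocation of the classical fact that every infinite-dimensional complemented subspace of $\ell_p$ ($1<p<+\infty$) is isomorphic to $\ell_p$ --- but once these are in place the remaining assembly from Theorem \ref{teoremadeLiu} and Corollaries \ref{corolariooperadoralp}, \ref{corolarioseminormalizar} is routine.
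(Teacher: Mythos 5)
Your proposal is correct and follows essentially the same route as the paper: the duality argument feeding Theorem \ref{teoremadeLiu} for (a), the pair of operators $\Phi_p$ (your $T$) and $\Psi_p$ with the factorization through the frame operator $S$ to produce the projection $\Psi_p\circ S^{-1}\circ\Phi_p$ onto $\Psi_p(X)$ for (b), and the direct appeal to Corollary \ref{corolarioseminormalizar} for (c). The only differences are cosmetic (you phrase the lower bound on $\Psi_p$ via $T\circ\Psi_p=S$ rather than the paper's explicit left-inverse identity, and you spell out why $\{f_i\}$ is bounded below, which the paper leaves implicit).
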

\begin{proof} Suppose that $\{f_i\}_{i\in \mathbb{N}}$ is a sequence of translates of $\{g_k\}_{1\le k\le k_0}$ by $\left\lbrace \lambda_i\right\rbrace_{i\in\mathbb{N}}$
Let $\Phi_p:\ell_p\rightarrow X$ be the bounded linear operator given by
\begin{align}
\Phi_{p}(\textbf{\emph{a}})=\sum\limits_{i=1}^{\infty}a_if_i \;\;\;\;\text{for } \textbf{\emph{a}} \in \ell_{p}.\label{operadordeelep}
\end{align}
If $\left\lbrace e^{(i)}\right\rbrace_{i\in\mathbb{N}}$ is the unit vector basis of $\ell_{p}$, for each $h'$ in $X'$ we have
\begin{align*}
\sum\limits_{i=1}^{+\infty}\left|h'\left(f_i\right)\right|^{p'}=&\sum\limits_{i=1}^{+\infty}\left|\Phi_{p}^{*}\left(h'\right)\left(e^{(i)}\right)\right|^{p'}=\left|\left|\Phi_p^{*}\left(h'\right)\right|\right|_{\ell_{p'}}^{p'}\le \left|\left|\Phi_p\right|\right|^{p'}\left|\left|h'\right|\right|^{p'}\ \le M_0^{p'}||h'||^{p'}\,.
\end{align*}
Hence, by Theorem \ref{teoremadeLiu}, $\left\lbrace \lambda_{i}\right\rbrace_{i\in\mathbb{N}}$ is relatively uniformly separated and, by Corollary \ref{corolariooperadoralp}, we can define a bounded linear operator $\Psi_p:X\rightarrow \ell_p$ by
\begin{align}
\Psi_p(g)=&\left(f_i'(g)\right)_{i\in\mathbb{N}} \;\;\;\;\text{for } g\in X.\nonumber
\end{align}
Let $S$ be the frame operator of $\left\lbrace f_{i}, f_i' \right\rbrace_{i\in \mathbb{N}}$, and take $Z=\Psi_p(X)$. For each $g\in X$, we have
\begin{align}\label{lainversadeelep}
S^{-1}\circ \Phi_p\big|_{Z} \left(\Psi_p(g)\right)=&S^{-1}\left(\Phi_{p}\left(\left(f_i'(g)\right)_{i\in\mathbb{N}}\right)\right)=S^{-1}\left(\sum\limits_{i=1}^{\infty}f_i'(g)f_i\right)\\ & =S^{-1}\left(S(g)\right)=g. \nonumber
\end{align}
This shows that $\Psi_p$ is an isomorphism between $X$ and $Z$. If $\textbf{\emph{a}}=\left(a_i\right)_{i\in\mathbb{N}}\in Z$, there is $g\in X$ such that $\Psi_p(g)=\textbf{\emph{a}}$. Thus, it follows from (\ref{operadordeelep}) and (\ref{lainversadeelep}) that
\begin{align}
\textbf{\emph{a}}=&\Psi_p(g)=\Psi_p\left(S^{-1}\circ \Phi_p \left(\Psi_p(g)\right)\right)=\Psi_p\left(S^{-1}\circ \Phi_p \left(\textbf{\emph{a}}\right)\right)=\left(\Psi_p\circ S^{-1}\circ \Phi_p\right)\left(\textbf{\emph{a}}\right). \nonumber
\end{align}
Therefore, $\Psi_p\circ S^{-1}\circ \Phi_p: \ell_p\rightarrow Z$ is a bounded projection. Since every complemented subspace of $\ell_p$ is isomorphic to $\ell_p$ (\cite[Theorem 2.a.3]{Lindenstrauss1977}), $X$ is isomorphic to $\ell_p$.

Now,  \ref{seminormalizados29} is a direct  application of Corollary \ref{corolarioseminormalizar}.
\end{proof}

\section{Schauder frames of translates in $L_p(\mathbb R^d)$.}\label{results_in_L_p}

\subsection{The case $2<p<+\infty$.}\label{2<p}

The main result of this section is the following strengthened version of Theorem \ref{teoremamarcoparap>2} that gives  unconditional frames of translates for $L_p\left(\mathbb{R}^d\right)$ with seminormalized coordinate functionals.

\begin{theorem}\label{Teoremamarcosincondicionalescon2<p<+infinito}
Let $2<p<+\infty$ and let $\left\lbrace \lambda_i \right\rbrace_{i\in \mathbb{N}}\subseteq \mathbb{R}^d$ be an unbounded sequence. There is a subsequence $\left\lbrace \lambda_{m_i}\right\rbrace_{i\in \mathbb{N}}$, a function $f\in L_p(\mathbb{R}^d)$, and a seminormalized sequence $\{F_i'\}_{i\in\mathbb{N}}\subseteq L_{p'}(\mathbb{R}^d)$ such that $\left\lbrace T_{\lambda_{m_i}}f, F_i' \right\rbrace_{i\in \mathbb{N}}$ is an unconditional Schauder frame for $L_p(\mathbb{R}^d)$.
\end{theorem}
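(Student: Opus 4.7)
The plan is to apply Corollary~\ref{corolarioseminormalizar} to an unconditional Schauder frame of translates obtained by modifying the construction in the proof of Theorem~\ref{teoremamarcoparap>2} from \cite{Freeman2014}. Since $L_p(\mathbb{R}^d)$ contains a complemented copy of $\ell_p$ for $2<p<+\infty$ and the translates $\{T_{\lambda_{m_i}}f\}$ are automatically bounded below (each has norm $\|f\|_p$), the remaining hypothesis of the corollary reduces to securing an upper $\ell_p$ estimate of the form
\[
\Bigl\|\sum_{i=1}^n a_i T_{\lambda_{m_i}}f\Bigr\|_p \le M_0 \Bigl(\sum_{i=1}^n |a_i|^p\Bigr)^{1/p}
\]
for some $M_0>0$ and every finite scalar sequence $\{a_i\}$.

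First, I would revisit the construction in the proof of Theorem~\ref{teoremamarcoparap>2}: given the unbounded $\{\lambda_i\}$, one passes to a subsequence $\{\lambda_{m_i}\}$ and builds $f\in L_p(\mathbb{R}^d)$ together with coordinate functionals $\{f_i'\}\subseteq L_{p'}(\mathbb{R}^d)$ so that $\{T_{\lambda_{m_i}}f, f_i'\}$ is an unconditional Schauder frame for $L_p(\mathbb{R}^d)$. My modification is to thin the subsequence further (using that $|\lambda_{m_i}|$ may be forced to grow arbitrarily fast along an unbounded sequence) and, in parallel, shape $f$ so that its dominant $L_p$-mass is concentrated on a bounded set whose translates by $\{\lambda_{m_i}\}$ are pairwise disjoint. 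Decomposing $f=f_0+g$ accordingly, disjointness gives the upper $\ell_p$ estimate for $\{T_{\lambda_{m_i}}f_0\}$ directly, while controlling the small remainder $g$ in $L_p$-norm (and using unconditionality) keeps the estimate valid for the sum.

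Once the modified unconditional Schauder frame $\{T_{\lambda_{m_i}}f, f_i'\}$ and the upper $\ell_p$ estimate are in hand, Corollary~\ref{corolarioseminormalizar} (applied with $q=p$) immediately produces a seminormalized $\{F_i'\}\subseteq L_{p'}(\mathbb{R}^d)$ such that $\{T_{\lambda_{m_i}}f, F_i'\}$ is an unconditional Schauder frame for $L_p(\mathbb{R}^d)$, as desired.

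The hardest part will be the simultaneous verification in the middle step: Freeman's construction is designed so that the frame reconstructs \emph{all} of $L_p(\mathbb{R}^d)$, whereas the upper $\ell_p$ estimate pushes the translates toward disjointly supported behaviour, whose closed linear span is merely a copy of $\ell_p$. Reconciling these opposing requirements means working inside Freeman's argument and coupling the choice of the subsequence, the generating function $f$, and the coordinate functionals $\{f_i'\}$, rather than invoking Theorem~\ref{teoremamarcoparap>2} as a black box. The flexibility needed comes from the fact that the reconstruction of a generic $g\in L_p(\mathbb{R}^d)$ can be carried by the overlapping, controllably small tails of $g$ (and of $f$), leaving room for the bulk of each translate to live in its own disjoint region.
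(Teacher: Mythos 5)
Your overall strategy---modify the construction from the proof of Theorem \ref{teoremamarcoparap>2} and then invoke Corollary \ref{corolarioseminormalizar}---is the right one, and it is essentially what the paper does. But the quantitative target you set for the middle step is wrong, and the tension you flag at the end is not something that can be ``reconciled'': it is a genuine obstruction. If $\left\lbrace T_{\lambda_{m_i}}f, f_i'\right\rbrace_{i\in\mathbb{N}}$ were an unconditional approximate Schauder frame for all of $L_p(\mathbb{R}^d)$ satisfying the upper $\ell_p$ estimate $\left\|\sum_i a_i T_{\lambda_{m_i}}f\right\|\le M_0\left(\sum_i |a_i|^p\right)^{1/p}$, then Proposition \ref{proposicionisomorfoaelep} would force $L_p(\mathbb{R}^d)$ to be isomorphic to $\ell_p$, which is false for $p\neq 2$ (for instance because $L_p(\mathbb{R}^d)$ contains a complemented copy of $\ell_2$, while no infinite-dimensional subspace of $\ell_p$ is isomorphic to $\ell_2$). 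So no amount of thinning the subsequence or concentrating the mass of $f$ on disjointly translated sets can produce the estimate you are after; the disjointly-supported picture is incompatible with the frame spanning the whole space.

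The fix is to aim for an upper $\ell_2$ estimate instead and apply Corollary \ref{corolarioseminormalizar} with $q=2$, which is legitimate because $L_p(\mathbb{R}^d)$ contains a complemented copy of $\ell_2$ for $2<p<+\infty$. This is exactly what the paper does: with Freeman's function $f=\sum_k\sum_{j\in J_k}N_k^{-1/2}T_{-\lambda_j}h_k$, the sum $\sum_{i\in A}b_iT_{\lambda_i}f$ splits into an ``off-diagonal'' part whose summands have pairwise disjoint supports---yielding $\left(\sum_{i\in A}|b_i|^p\sum_k N_k^{1-p/2}\right)^{1/p}\le\left(\sum_{i\in A}|b_i|^2\right)^{1/2}$ by \eqref{cotadelasuma} and the inequality $\|\cdot\|_{\ell_p}\le\|\cdot\|_{\ell_2}$ valid for $p\ge 2$---and a ``diagonal'' part $\sum_{i\in A}b_iN_{k_i}^{-1/2}h_{k_i}$, which is controlled by Cauchy--Schwarz together with the bounded operator $\Phi_2:\ell_2\to L_p(\mathbb{R}^d)$, $\Phi_2(\textbf{\emph{a}})=\sum_k a_kh_k$, furnished by Remark \ref{lemaacodadoabajovaaeleq2}. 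Both pieces are dominated by a constant times $\left(\sum_{i\in A}|b_i|^2\right)^{1/2}$, and Corollary \ref{corolarioseminormalizar} with $q=2$ then yields the seminormalized sequence $\{F_i'\}_{i\in\mathbb{N}}$.
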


\begin{proof}
We first present a sketch of the main steps in the proof of Theorem \ref{teoremamarcoparap>2} from \cite[Theorem 3.2]{Freeman2014}.
\begin{enumerate}[\textbf{(\alph*)}]
\item \label{baseincondicional} Choose a normalized unconditional Schauder basis $\left\lbrace h_i\right\rbrace$ for $L_p(\mathbb{R}^d)$ such that \allowbreak $\diam{\left(\supp{\left(h_i\right)}\right)}\le 1$ for each $i\in \mathbb{N}$, and a sequence $\left\lbrace N_k\right\rbrace_{k\in\mathbb{N}}\subseteq \mathbb{N}$ such that
\begin{align}
\left(\sum\limits_{k=1}^{\infty}N_k^{1-\frac{p}{2}}\right)^{\frac{1}{p}}<\frac{1}{2K_u\left(\left\lbrace h_i,h_i'\right\rbrace_{i\in\mathbb{N}}\right)},\label{cotadelasuma}
\end{align}
where $\left\lbrace h_i'\right\rbrace_{i\in\mathbb{N}}$ is the sequence of coordinate functionals associated with the basis $\left\lbrace h_i\right\rbrace_{i\in\mathbb{N}}$.
\item \label{subsucesionquesepara} Choose a sequence of positive integers $\left\lbrace j_s^{(k)}\right\rbrace_{\substack{k\in\mathbb{N}\\1\le s\le N_k}}$ with the following properties:
\begin{enumerate}[\textbf{$($\roman*$)$}]
\item $j_s^{(k)}<j_{s'}^{(k')}\;\forall k<k'\;\forall 1\le s\le N_k'\;\forall 1\le s'\le k'$.
\item $j_s^{(k)}<j_{s'}^{(k)}\;\forall k\in\mathbb{N}\;\forall 1\le s<s'\le N_k$.
\item $\left|\lambda_{j_1^{(1)}}\right|>1$, and
\begin{align*}
\left|\lambda_{j_s^{(k)}}\right|>&3\max\left\lbrace \left|\lambda_{j_{s'}^{(k')}}\right|:j_{s'}^{(k')}<j_s^{(k)}\right\rbrace+2\max\left\lbrace \left|x\right|:x\in \overline{\supp{\left(h_j\right)}}, 1\le j\le k\right\rbrace  \\
&\forall k\in\mathbb{N} \; \forall 1\le s\le N_k.
\end{align*}
\end{enumerate}
\end{enumerate}
From these choices, the sets
\begin{align*}
J_k=\left\lbrace j_s^{(k)}: 1\le s\le N_k\right\rbrace
\end{align*}
are pairwise disjoint.

Let $\mathfrak{D}=\bigcup\limits_{k=1}^{\infty}J_k$, and
\begin{numcases}{k_i=}
k & if $i\in J_k$; \nonumber\\
0 & if $i\not\in \mathfrak{D}$. \nonumber
\end{numcases}
It follows that
\begin{align}
\supp{\left(T_{\lambda_i-\lambda_j} h_{k_j}\right)}\cap\supp{\left(T_{\lambda_{i'}-\lambda_{j'}}h_{k_{j'}}\right)}=&\emptyset \label{interseccionvacia}
\end{align}
for every $j,j',i,i'\in \mathfrak{D}$ with $i\not=j$, $i'\not=j'$, and $(i,j)\not=(i',j')$.
Define
\begin{align*}
f=\sum\limits_{k=1}^{\infty}\sum\limits_{j\in J_k}N_k^{-\frac{1}{2}}T_{-\lambda_{j}}h_k,
\end{align*}
and for each $i\in \mathbb{N}$,
\begin{numcases}{f_i'=}
N_k^{-\frac{1}{2}}h_k' & if $i\in J_k$\nonumber\\
0 & if $i\not\in \mathfrak{D}$. \nonumber
\end{numcases}
With this, $\left\lbrace T_{\lambda_i}f,f_i'\right\rbrace$ is an unconditional approximate Schauder frame for $L_p(\mathbb{R}^d)$.
\medskip
Now we modify this proof to obtain our result. First, we write $$\mathfrak{D}=\{m_i\}_{i\in \mathbb N},$$ with  $\{m_i\}_i$ an increasing sequence of natural numbers. Since $\left\lbrace T_{\lambda_{i}}f,f_i'\right\rbrace_{i\in\mathbb{N}}$ is an unconditional approximate Schauder frame for $L_p(\mathbb{R}^d)$ and $f_i'=0$ for all $i\not\in \mathfrak{D}$, then clearly $\left\lbrace T_{\lambda_{m_i}}f,f_{m_i}'\right\rbrace_{i\in\mathbb{N}}$ is an unconditional approximate Schauder frame for $L_p(\mathbb{R}^d)$.
By Remark \ref{lemaacodadoabajovaaeleq2}, we can define a bounded linear operator $\Phi_2:\ell_2\rightarrow L_p(\mathbb{R}^d)$  by
\begin{align}
\Phi_{2}(\textbf{\emph{a}})=\sum\limits_{k=1}^{\infty}a_kh_k \;\;\;\;\forall \textbf{\emph{a}}=\left(a_i\right)_{i\in\mathbb{N}} \in \ell_{2}.\label{continuodesdeele2}
\end{align}
For $A$ a finite subset of $\mathfrak{D}$ and $\{b_i\}_{i\in\mathbb{N}}\subset\mathbb K$ we have
\begin{align}
\left|\left|\sum\limits_{i\in A}b_i T_{\lambda_{i}}f\right|\right|=&\left|\left|\sum\limits_{i\in A}b_i\sum\limits_{k=1}^{\infty}\sum\limits_{j\in J_{k}}N_k^{-\frac{1}{2}}T_{\lambda_i-\lambda_j}h_k\right|\right|\nonumber\\
\le&\left|\left|\sum\limits_{i\in A}b_i\sum\limits_{\substack{k=1}}^{\infty}\sum\limits_{\substack{j\in J_k\\j\not=i}}N_k^{-\frac{1}{2}}T_{\lambda_i-\lambda_j}h_k\right|\right|+\left|\left|\sum\limits_{i\in A}b_iN_{k_i}^{-\frac{1}{2}}h_{k_i}\right|\right|\,.\label{separaenAyB}
\end{align}
Next, we estimate each of the terms on the right-hand side of (\ref{separaenAyB}). From  (\ref{cotadelasuma}), (\ref{interseccionvacia}) and the fact that $\{h_i\}_{i\in\mathbb{N}}$ is normalized, we have
\begin{align}
&\left|\left|\sum\limits_{i\in A}b_i\sum\limits_{\substack{k=1}}^{\infty}\sum\limits_{\substack{j\in J_k\\j\not=i}}N_k^{-\frac{1}{2}}T_{\lambda_i-\lambda_j}h_k\right|\right|=\nonumber\\
=&\left(\sum\limits_{i\in A}\sum\limits_{\substack{k=1}}^{\infty}\sum\limits_{\substack{j\in J_{k}\\j\not=i}}\int_{\supp{\left(T_{\lambda_{i}-\lambda_{j}}h_{k}\right)}}\left|b_{i}N_{k}^{-\frac{1}{2}}\left(T_{\lambda_{i}-\lambda_{j}}h_{k}\right)(x)\right|^{p}dx\right)^{\frac{1}{p}}\nonumber\\
=&\left(\sum\limits_{i\in A}\sum\limits_{\substack{k=1}}^{\infty}\sum\limits_{\substack{j\in J_k\\j\not=i}}\left|b_i\right|^{p}N_k^{-\frac{p}{2}}\right)^{\frac{1}{p}}\le\left(\sum\limits_{i\in A}\left|b_i\right|^{p}\sum\limits_{\substack{k=1}}^{\infty}N_k^{1-\frac{p}{2}}\right)^{\frac{1}{p}}\nonumber\\
\le&\left(\sum\limits_{i\in A}\left|b_i\right|^{2}\right)^{\frac{1}{2}}.\label{primeracota2}
\end{align}
From (\ref{continuodesdeele2}), we also  have
\begin{align}
\left|\left|\sum\limits_{i\in A}b_iN_{k_i}^{-\frac{1}{2}}h_{k_i}\right|\right|=&\left|\left|\sum\limits_{\substack{k\in\mathbb{N}\\A\cap J_k\not=\emptyset}}\left(\sum\limits_{i\in A\cap J_k}b_iN_{k}^{-\frac{1}{2}}\right)h_k\right|\right| \\ \le & \left|\left|\Phi_2\right|\right|\left(\sum\limits_{\substack{k\in\mathbb{N}\\A\cap J_k\not=\emptyset}}\left|\sum\limits_{i\in A\cap J_k}b_iN_{k}^{-\frac{1}{2}}\right|^{2}\right)^{\frac{1}{2}}\nonumber\\
\le&\left|\left|\Phi_2\right|\right|\left(\sum\limits_{\substack{k\in\mathbb{N}\\A\cap J_k\not=\emptyset}}\left(\sum\limits_{i\in A\cap J_k}\left|b_i\right|^{2}\right)\left(\sum\limits_{i\in A\cap J_k}N_k^{-1}\right)\right)^{\frac{1}{2}}\nonumber\\
\le& \left|\left|\Phi_2\right|\right|\left(\sum\limits_{\substack{k\in\mathbb{N}\\A\cap J_k\not=\emptyset}}\left(\sum\limits_{i\in A\cap J_k}\left|b_i\right|^{2}\right)\right)^{\frac{1}{2}}\nonumber\\
=& \left|\left|\Phi_2\right|\right|\left(\sum\limits_{i\in A}\left|b_i\right|^{2}\right)^{\frac{1}{2}}.\label{terceracota2}
\end{align}
From (\ref{separaenAyB}), (\ref{primeracota2}), and (\ref{terceracota2}) we get
\begin{align*}
\left|\left|\sum\limits_{i\in A}b_i T_{\lambda_{i}}f\right|\right|\le&\left(1+\left|\left|\Phi_2\right|\right|\right)\left(\sum\limits_{i\in A}\left|b_i\right|^{2}\right)^{\frac{1}{2}}.
\end{align*}
Thus, for every $n\in\mathbb{N}$ and for every set of scalars $\{a_i\}_{1\le i\le n}$, we have
\begin{align*}
\left|\left|\sum\limits_{i=1}^{n}a_i T_{\lambda_{m_i}}f\right|\right|\le&\left(1+\left|\left|\Phi_2\right|\right|\right)\left(\sum\limits_{i=1}^{n}\left|a_i\right|^{2}\right)^{\frac{1}{2}}.
\end{align*}
To finish the proof, let $M_0=1+\left|\left|\Phi_2\right|\right|$ and apply Corollary \ref{corolarioseminormalizar}
to the unconditional approximate Schauder frame $\left\lbrace T_{\lambda_{m_i}}f, f_{m_i}'\right\rbrace_{i\in\mathbb{N}}$.
\end{proof}

A question that arises naturally in this context is whether the seminormalized sequence of Theorem \ref{Teoremamarcosincondicionalescon2<p<+infinito} can be chosen so that $\left|F_i'(f_i)\right|\ge r$ for some $r>0$. The answer is negative, as the following extension of Theorem \ref{teoremabases2<p<+infinito} shows.
\begin{proposition}\label{proposicioncomobases2<p<+infinito}
Let $2<p<+\infty$, and let $\left\lbrace f_i,f_i' \right\rbrace_{i\in\mathbb{N}}\subseteq X\times X'$ be an unconditional approximate Schauder frame of translates for $X\subseteq L_p(\mathbb{R}^d)$.

If $X$ is complemented in $L_p(\mathbb{R}^d)$ and there is $r>0$ such that $r\le |f_i'(f_i)|$ for each $i \in \mathbb{N}$, then $g\rightarrow \left(f_i'(g)\right)_{i\in\mathbb{N}}$ defines an isomorphism between $X$ and a complemented subspace of $\ell_p$. Thus, $X$ is isomorphic to $\ell_p$.

\end{proposition}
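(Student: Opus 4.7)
The plan is to reduce the statement to Proposition~\ref{proposicionisomorfoaelep}, whose sole hypothesis is an $\ell_p$-upper-bound on finite linear combinations of the $f_i$'s.

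First, I would verify that both $\{f_i\}_{i\in\mathbb N}$ and $\{f_i'\}_{i\in\mathbb N}$ are seminormalized. Since translation in $L_p$ is an isometry and the generators $\{g_k\}_{1\le k\le k_0}$ are nonzero, one has $\min_k\|g_k\|_p\le\|f_i\|_p\le\max_k\|g_k\|_p$. The frame constant inequality gives $\|f_i\|\|f_i'\|\le 2K(\{f_i,f_i'\}_{i\in\mathbb N})$, and combining this with the hypothesis $r\le|f_i'(f_i)|\le\|f_i\|\|f_i'\|$ sandwiches $\|f_i'\|$ between two positive constants as well. Corollary~\ref{propositionmarcoincondicionalcud} then gives that $\{\lambda_i\}_{i\in\mathbb N}$ is relatively uniformly separated, and Corollary~\ref{corolariooperadoralp} produces the bounded operator $\Psi_p:X\to\ell_p$ defined by $\Psi_p(g)=(f_i'(g))_{i\in\mathbb N}$.

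The core task is then to establish
\[
\Big\|\sum_{i=1}^n a_i f_i\Big\|\le M_0\Big(\sum_{i=1}^n|a_i|^p\Big)^{1/p}
\]
for every finitely supported $(a_i)$. By duality this is equivalent to the bound $(\sum|h'(f_i)|^{p'})^{1/p'}\le C\|h'\|$ on $X'$: once the latter is in hand, H\"older's inequality yields the displayed estimate as $\|\sum a_if_i\|=\sup_{\|h'\|\le 1}|\sum a_ih'(f_i)|$. Here the complementedness of $X$ in $L_p(\mathbb R^d)$ is the essential tool. Fixing a bounded projection $P:L_p(\mathbb R^d)\to X$, the adjoint $P^*:X'\to L_{p'}(\mathbb R^d)$ identifies $X'$ isomorphically with a complemented subspace of $L_{p'}$, so the dual unconditional expansion $\sum h'(f_i)f_i'=S^*h'$ (whose unconditional convergence follows from Lemma~\ref{lemmadual} and reflexivity of $X$) can be regarded as taking place in $L_{p'}$. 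Combining this with the seminormalization of $\{f_i'\}$ and the translate structure of $\{f_i\}$, one then applies Theorem~\ref{teoremadeLiu} in the spirit of the proof of Theorem~\ref{teoremabases2<p<+infinito} from \cite{Freeman2014} to obtain the desired $\ell_{p'}$-summability.

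With this estimate in place, Proposition~\ref{proposicionisomorfoaelep} directly yields that $\Psi_p$ is an isomorphism between $X$ and a complemented subspace of $\ell_p$, and hence $X\cong\ell_p$ by \cite[Theorem 2.a.3]{Lindenstrauss1977}. The main obstacle is precisely the dual $\ell_{p'}$-summability: Proposition~\ref{TeoremadeOrliczextendido}\ref{Orliczdual} together with the cotype~$2$ inherited by $X'\subseteq L_{p'}$ delivers $\ell_2$-summability of $(h'(f_i))_{i\in\mathbb N}$ for free, but the strictly stronger $\ell_{p'}$-summability is needed here (since $p'<2$ gives $\|x\|_{p'}\ge\|x\|_2$, so an $\ell_2$ bound is strictly weaker than an $\ell_{p'}$ bound). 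Upgrading the automatic $\ell_2$ estimate to the required $\ell_{p'}$ estimate genuinely demands using both the complementedness of $X$ in $L_p(\mathbb R^d)$ and the specific translate structure of $\{f_i\}$ together.
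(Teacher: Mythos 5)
Your overall architecture is the same as the paper's: reduce to Proposition~\ref{proposicionisomorfoaelep} by proving the upper $\ell_p$-estimate $\left\|\sum_{i=1}^n a_if_i\right\|\le M_0\left(\sum_{i=1}^n|a_i|^p\right)^{1/p}$, which by duality amounts to the $\ell_{p'}$-summability of $\left(h'(f_i)\right)_{i\in\mathbb{N}}$ for every $h'\in X'$; and you correctly observe that Proposition~\ref{TeoremadeOrliczextendido}~\ref{Orliczdual} only gives $\ell_2$-summability (since $p'<2$), so something beyond cotype is required. But at exactly that point the proposal has a genuine gap: you do not prove the $\ell_{p'}$-summability, you only assert that it follows from the complementedness, the seminormalization and the translate structure ``in the spirit of'' an external proof. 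Worse, the tool you name, Theorem~\ref{teoremadeLiu}, runs in the wrong direction: its \emph{hypothesis} is the bound $\sum_i|h'(f_i)|^{q}\le K\|h'\|^{q}$ and its \emph{conclusion} is that $\{\lambda_i\}_{i\in\mathbb{N}}$ is relatively uniformly separated. It cannot be used to ``obtain the desired $\ell_{p'}$-summability''; it consumes that summability as input.

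The step you are missing is a localization argument for the functionals, which is the actual content of the paper's proof. One pushes $f_i'$ into $L_{p'}(\mathbb{R}^d)$ via $P^*$ and shows that $P^*f_i'$ has $L_{p'}$-mass bounded below on the translated cube $Q+\lambda_i$: choosing $Q$ so large that $\left\|g_k\big|_{\mathbb{R}^d\setminus Q}\right\|\le \frac{r}{2M}$ for all $k$, the hypothesis gives
\begin{align*}
r\le\left|f_i'(f_i)\right|=\left|\int_{\mathbb{R}^d}\left(P^{*}f_i'\right)(x)f_i(x)\,dx\right|\le \left\|P^{*}f_i'\right\|\left\|g_k\big|_{\mathbb{R}^d\setminus Q}\right\|+M\left\|P^{*}f_i'\big|_{Q+\lambda_i}\right\|\le \frac{r}{2}+M\left\|P^{*}f_i'\big|_{Q+\lambda_i}\right\|,
\end{align*}
whence $\left\|P^{*}f_i'\big|_{Q+\lambda_i}\right\|\ge \frac{r}{2M}$. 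Since $\{\lambda_i\}$ is relatively uniformly separated, after a finite partition the sets $Q+\lambda_i$ are pairwise disjoint, and the series $\sum_i h'(f_i)P^{*}f_i'$ converges unconditionally in $L_{p'}(\mathbb{R}^d)$ (by Lemma~\ref{lemmadual} and boundedness of $P^*$); Lemma~\ref{lemaincondicionallp}, applied with exponent $p'$ to the sequence $\{P^{*}f_i'\}$ rather than to the translates themselves, then yields $\sum_i|h'(f_i)|^{p'}<+\infty$, and a closed-graph/uniform-boundedness argument gives the operator $\Theta_{p'}:X'\to\ell_{p'}$ whose transpose is the required $\Phi_p$. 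Without this estimate (or an equivalent substitute) your proof does not close; note also that it is here, not in establishing uniform separation, that the hypothesis $r\le|f_i'(f_i)|$ does its real quantitative work.
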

\begin{proof}
Since $\left\lbrace f_i\right\rbrace_{i\in\mathbb{N}}$ is seminormalized, $\left\lbrace f_i'\right\rbrace_{i\in\mathbb{N}}$ is bounded.
Let $P:L_p(\mathbb{R}^d)\rightarrow X$ be a bounded projection. Suppose that $\{f_i\}_{i\in \mathbb{N}}$ is a sequence of translates of $\{g_k\}_{1\le k\le k_0}$ by $\left\lbrace \lambda_i\right\rbrace_{i\in\mathbb{N}}$ and choose $M>0$ and a cube $Q\subseteq \mathbb{R}^d$ such that
\begin{align}
\max\{||P||||f_i'||, ||f_i||\} \le M\;\;\;\forall i\in\mathbb{N}; \label{cotaparatodos}\\
\left|\left|g_k\big|_{\mathbb{R}^d\setminus Q}\right|\right|\le \frac{r}{2M} \;\;\;\forall 1\le k\le k_0. \label{cotagkafueradelcubo}
\end{align}
Given that $r\le |f_i'(f_i)|$ for each $i \in \mathbb{N}$, $\left\lbrace f_i'\right\rbrace_{i\in\mathbb{N}}$ is bounded below, and thus seminormalized. Hence, by Corollary~\ref{propositionmarcoincondicionalcud}, $\left\lbrace \lambda_i\right\rbrace_{i\in\mathbb{N}}$ is relatively uniformly separated. By Remark \ref{lemmacaracterizarcuasiuniformementediscreto}  there is a partition $\left\lbrace A_m\right\rbrace_{1\le m\le m_0}$ of $\mathbb{N}$ such that
\begin{align}
Q+\lambda_{i}\cap Q+\lambda_{j}=&\emptyset \;\;\;\;\forall (i,j)\in A_m\times A_m: i\not=j\;\;\forall 1\le m\le m_0.\label{separados2}
\end{align}
Let $\left\lbrace \Delta_k \right\rbrace_{1\le k\le k_0}$ be a partition of $\mathbb{N}$ for which (\ref{condiciontraslacionesfinitas}) holds, and fix $1\le k\le k_0$ and $1\le m\le m_0$. It follows from (\ref{cotaparatodos}) and (\ref{cotagkafueradelcubo}) that
\begin{align*}
r\le&\left|f_i'\left(f_i\right)\right|=\left|f_i'\left(P(f_i)\right)\right|=\left|P^{*}f_i'\left(f_i\right)\right|=\left|\int_{\mathbb{R}^d}\left(P^{*}f_i'\right)(x)f_i(x)dx\right|\\
\le& \int_{\left(\mathbb{R}^d\setminus Q\right)+\lambda_i}\left|\left(P^{*}f_i'\right)(x)g_k\left(x-\lambda_i\right)\right|dx+\left|\left|P^{*}f_i'\big|_{Q+\lambda_i}\right|\right|\left|\left|f_i\big|_{Q+\lambda_i}\right|\right|\\
\le&\left|\left|P^{*}f_i'\right|\right|\left|\left|g_k\big|_{\mathbb{R}^d\setminus Q}\right|\right|+M\left|\left|P^{*}f_i'\big|_{Q+\lambda_i}\right|\right|\le \frac{r}{2}+M\left|\left|P^{*}f_i'\big|_{Q+\lambda_i}\right|\right|,
\end{align*}
for all $i\in \Delta_{k}\cap A_m$. Thus,
\begin{align}
\frac{r}{2M}\le&\left|\left|P^{*}f_i'\big|_{Q+\lambda_i}\right|\right|\;\;\;\;\forall i\in \Delta_{k}\cap A_m.\label{cotainferiorcomobases2<p}
\end{align}
Since the series $\sum\limits_{i=1}^{\infty}h'(f_i)f_i'$ converges unconditionally for each $h'\in X'$ and $P^{*}$ is bounded, so does $\sum\limits_{i=1}^{\infty}h'(f_i)P^{*}f_i'$. From this fact, (\ref{separados2}), (\ref{cotainferiorcomobases2<p}) and Lemma \ref{lemaincondicionallp}, we deduce that there is linear operator $\Theta_{p'}:X'\rightarrow \ell_{p'}$ given by
\begin{align*}
\Theta_{p'}(h')=&\left(h'\left(f_i\right)\right)_{i\in\mathbb{N}},
\end{align*}
which is bounded by the uniform boundedness principle. As in Remark \ref{lemaacodadoabajovaaeleq2}, this implies that there is a bounded linear operator $\Phi_p:\ell_p\rightarrow X$ given by
\begin{align}
\Phi_{p}(\textbf{\emph{a}})=\sum\limits_{i=1}^{\infty}a_if_i \;\;\;\;\forall \textbf{\emph{a}}=\left(a_i\right)_{i\in\mathbb{N}} \in \ell_{p}.\label{operadordeelep29}
\end{align}
Now we apply Proposition \ref{proposicionisomorfoaelep} to complete the proof.
\end{proof}
If, in addition to the hypotheses of Proposition \ref{proposicioncomobases2<p<+infinito}, we add the condition that $\left\lbrace f_i\right\rbrace_{i\in\mathbb{N}}$ be basic, it follows from (\ref{operadordeelep29}) and the uniqueness of the coefficients that $\Phi_{p}$ is injective. Since it is also surjective and bounded, it is an isomorphism between $\ell_p$ and $X$. Thus, $\left\lbrace f_i\right\rbrace_{i\in\mathbb{N}}$ is equivalent to the unit vector basis of $\ell_p$. The condition that $X$ be complemented in $L_p\left(\mathbb{R}^d\right)$ is essential in this context. In fact, it was proven in \cite[Example 2.16]{Odell2011} that for $p>2$, there are unconditional basic sequences of translates that are not equivalent to the unit vector basis of $\ell_p$.

\subsection{The case $1<p\le 2$.}\label{1<p<=2}
Unlike the case $2<p<+\infty$, for $1<p\le 2$ Problem \ref{pregunta1} has a negative answer for every sequence of translates. For $1<p<2$, this is a consequence of a more general result, which we prove next.

\begin{theorem}\label{Teorema 1<p<2}
Let $1<p\le 2$, and let $X$ be a subspace of $L_p(\mathbb{R}^d)$. Suppose $\left\lbrace f_{i}, f_i' \right\rbrace_{i\in \mathbb{N}}$ is an unconditional approximate Schauder frame of translates for $X$.

The following are equivalent.
\begin{enumerate}[label=\rm{(\roman*)}]
\item \label{hayseminormalizada1<p<2} There is a seminormalized sequence $\{F_i'\}_{i\in\mathbb{N}}\subseteq X'$ such that $\{f_i,F_i'\}_{i\in\mathbb{N}}$ is an unconditional Schauder frame for $X$.
\item \label{cota1<p<2} There is $M_0>0$ such that
\begin{align}
\left|\left|\sum\limits_{i=1}^{n}a_if_{i}\right|\right|\le&M_0\left(\sum\limits_{i=1}^{n}|a_i|^{p}\right)^{\frac{1}{p}} \label{cotaele22}
\end{align}
for each $n\in\mathbb{N}$ and every finite set of scalars $\{a_i\}_{1\le i\le n}$.
\end{enumerate}
Under these conditions, $g\rightarrow \left(f_i'(g)\right)_{i\in\mathbb{N}}$ defines an isomorphism between $X$ and a complemented subspace of $\ell_p$. Thus, $X$ is isomorphic to $\ell_p$.
\end{theorem}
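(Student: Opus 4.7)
The plan is to deduce everything from the machinery already in place, principally Proposition~\ref{proposicionisomorfoaelep}. The implication (ii)$\Rightarrow$(i), together with the final assertion that $g\mapsto(f_i'(g))_{i}$ is an isomorphism between $X$ and a complemented subspace of $\ell_p$ (so that $X\cong\ell_p$), is an immediate application of Proposition~\ref{proposicionisomorfoaelep}: the upper $\ell_p$-bound in (ii) is exactly the hypothesis that proposition imposes on the translates $\{f_i\}$, and conclusions (c) and (b) of that proposition deliver, respectively, the seminormalized frame in (i) and the isomorphism statement.

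The substantive step is therefore (i)$\Rightarrow$(ii). I would start from an unconditional Schauder frame $\{f_i,F_i'\}_{i\in\mathbb N}$ for $X$ with $\{F_i'\}$ seminormalized. Because $1<p\le 2$ makes $L_p(\mathbb R^d)$, and hence its subspace $X$, reflexive, Lemma~\ref{lemmadual} converts the unconditional convergence of $\sum_i F_i'(g)f_i$ (valid for each $g\in X$) into the unconditional convergence of $\sum_i h'(f_i)F_i'$ for each $h'\in X'$. Since $\{F_i'\}$ is bounded below, Remark~\ref{lemaacodadoabajovaaeleq}(b) then produces a bounded operator $\Theta_{q}\colon X'\rightarrow\ell_q$ with $\Theta_q(h')=(h'(f_i))_i$, where $q=\max\{2,p'\}=p'$; here the hypothesis $p\le 2$ enters to force $p'\ge 2$ and thus $q=p'$. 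Its transpose $\Phi_p:=\Theta_{p'}^{*}$ acts from $\ell_p$ into $X''$, which coincides with $X$ by reflexivity, and the remark ensures $\Phi_p(e^{(j)})=f_j$ for every $j$ and, by continuity, $\Phi_p(\mathbf a)=\sum_i a_if_i$ for every $\mathbf a\in\ell_p$. Setting $M_0=\|\Phi_p\|$ then yields
\[
\Big\|\sum_{i=1}^n a_i f_i\Big\|\le M_0\Big(\sum_{i=1}^n|a_i|^p\Big)^{1/p},
\]
which is (ii).

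No real obstacle is expected, since all the heavy lifting has been carried out in the preparatory results. The only delicate point is bookkeeping with exponents in Remark~\ref{lemaacodadoabajovaaeleq}(b): the hypothesis $p\le 2$ is used precisely to identify $\max\{2,p'\}$ with $p'$, making the codomain of $\Theta$ equal to $\ell_{p'}$ whose predual is $\ell_p$, so that the upper $\ell_p$-bound on $\{f_i\}$ falls out of the adjoint operator for free. Once (ii) is established, the final isomorphism assertion follows by invoking Proposition~\ref{proposicionisomorfoaelep}(b), closing the argument.
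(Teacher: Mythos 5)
Your proposal is correct and follows essentially the same route as the paper: (ii)$\Rightarrow$(i) and the final isomorphism assertion are read off from Proposition~\ref{proposicionisomorfoaelep}, while (i)$\Rightarrow$(ii) comes from Remark~\ref{lemaacodadoabajovaaeleq2}(b) applied to $\{f_i,F_i'\}$ with $q=\max\{2,p'\}=p'$, setting $M_0=\|\Phi_p\|$. The only difference is that you spell out the intermediate use of reflexivity and Lemma~\ref{lemmadual} to justify the hypotheses of that remark, which the paper leaves implicit.
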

\begin{proof}To see that
\ref{hayseminormalizada1<p<2}$\Longrightarrow$\ref{cota1<p<2}, we apply Remark \ref{lemaacodadoabajovaaeleq2} to $\left\lbrace f_i,F_i'\right\rbrace_{i\in\mathbb{N}}$ and set $M_0=\left|\left|\Phi_{p}\right|\right|$.
The implication
\ref{cota1<p<2}$\Longrightarrow$\ref{hayseminormalizada1<p<2} follows immediately by Proposition \ref{proposicionisomorfoaelep}.
Also by Proposition \ref{proposicionisomorfoaelep}, we conclude that $g\rightarrow \left(f_i'(g)\right)_{i\in\mathbb{N}}$ is an isomorphism between $X$ and a complemented subspace of $\ell_p$.
\end{proof}

If, in addition to the hypotheses of Theorem \ref{Teorema 1<p<2} we assume that $\left\lbrace f_i\right\rbrace_{i\in\mathbb{N}}$ is a basic sequence, then we see that $\Phi_{p}$ is an isomorphism between $\ell_p$ and $X$, so $\left\lbrace f_i\right\rbrace_{i\in\mathbb{N}}$ is equivalent to the unit vector basis of $\ell_p$. This gives an extension of Theorem \ref{teoremabaseequivalente12} for $L_p\left(\mathbb{R}^d\right)$ with $d\ge 1$ and for sequences of translates of finitely many functions.

Note that Theorem \ref{Teorema 1<p<2} does not give an answer to Problem \ref{pregunta1} for $p=2$. A negative answer will follow from more general results about the compactness of restriction operators, which are variants of similar results from \cite{Freeman2014} and \cite{Liu2012}.
We first state a result that follows immediately from \cite[Lemma 3.4 (i)]{Liu2012}. A similar result result was also proven in \cite{Odell2011}.
\begin{lemma}\label{lematrasladados3}
Let $1 \le p< +\infty$, and let $\left\lbrace \lambda_{i}\right\rbrace_{i\in \mathbb{N}} \subseteq \mathbb{R}^d$ be a relatively uniformly separated sequence. If $\{f_i\}_{i\in\mathbb{N}}\subset L_p(\mathbb R^d)$ is a sequence of translates of $\left\lbrace g_k\right\rbrace_{1\le k\le k_0}$ by $\left\lbrace \lambda_i\right\rbrace_{i\in\mathbb{N}}$ and $D$ is a bounded measurable set, then
\begin{align}
\sum\limits_{i=1}^{\infty}\left|\left|f_{i}\big|_D\right|\right|^p<+\infty.\nonumber
\end{align}
\end{lemma}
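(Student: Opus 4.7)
The plan is to reduce to the simplest possible setting (a single function translated along a uniformly separated sequence) and then apply a straightforward swap of sum and integral controlled by a packing estimate.

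First I would use the two given partitions to simplify the setup. Since $\{\lambda_i\}_{i\in\mathbb{N}}$ is relatively uniformly separated, there is a partition $\{\Gamma_m\}_{1\le m\le m_0}$ of $\mathbb{N}$ such that each $\{\lambda_i\}_{i\in\Gamma_m}$ is uniformly separated, say with separation constant $\delta_m>0$. Intersecting this with the partition $\{\Delta_k\}_{1\le k\le k_0}$ which selects which function $g_k$ is being translated, we obtain a finite partition of $\mathbb{N}$ on whose blocks the problem reduces to bounding $\sum_{i} \|T_{\lambda_i} g\big|_D\|_p^p$ for a single fixed $g \in L_p(\mathbb{R}^d)$ and a uniformly separated sequence $\{\lambda_i\}$. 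Since a finite sum of finite quantities is finite, the original claim will follow.

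Next I would carry out the actual estimate. Substituting $y = x - \lambda_i$,
\begin{align*}
\|T_{\lambda_i} g\big|_D\|_p^p = \int_D |g(x-\lambda_i)|^p\, dx = \int_{D-\lambda_i} |g(y)|^p\, dy = \int_{\mathbb{R}^d} |g(y)|^p \mathbf{1}_{D-\lambda_i}(y)\, dy.
\end{align*}
Summing over $i$ and invoking Tonelli's theorem,
\begin{align*}
\sum_{i} \|T_{\lambda_i} g\big|_D\|_p^p = \int_{\mathbb{R}^d} |g(y)|^p \Big(\sum_{i} \mathbf{1}_{D-\lambda_i}(y)\Big) dy.
\end{align*}
The key observation is that $y\in D-\lambda_i$ is equivalent to $\lambda_i \in D-y$, which is a translate of the bounded set $D$. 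Because $\{\lambda_i\}$ is $\delta_m$-separated and $D-y$ has diameter equal to $\diam(D)<\infty$, a standard volume/packing argument in $\mathbb{R}^d$ gives a constant $N = N(d,\delta_m,\diam(D))$ such that $\#\{i : \lambda_i \in D-y\} \le N$ uniformly in $y$. Thus the inner sum is bounded by $N$ pointwise, and we conclude
\begin{align*}
\sum_{i} \|T_{\lambda_i} g\big|_D\|_p^p \le N\,\|g\|_p^p < +\infty.
\end{align*}

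There is essentially no obstacle here, which matches the excerpt's remark that the result follows \emph{immediately} from \cite[Lemma 3.4 (i)]{Liu2012}; the only subtle point is ensuring the packing constant depends only on $d$, $\delta_m$, and $\diam(D)$ (not on $y$), which is the entire content of the uniform separation hypothesis combined with the boundedness of $D$.
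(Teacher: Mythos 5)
Your proof is correct. Note that the paper itself gives no argument for this lemma: it simply states that the result ``follows immediately from \cite[Lemma 3.4 (i)]{Liu2012}'', so there is no internal proof to compare against. What you have written is a self-contained version of the standard argument that the citation delegates: reduce by the two finite partitions to a single function translated along a $\delta$-separated sequence, change variables, apply Tonelli, and bound the overlap function $\sum_i \mathbf{1}_{D-\lambda_i}(y)$ uniformly by a packing constant $N(d,\delta,\diam(D))$ (disjoint balls of radius $\delta/2$ centred at the $\lambda_i$ lying in $D-y$ all fit inside a ball of radius $\diam(D)+\delta/2$, giving $N \le (1+2\diam(D)/\delta)^d$). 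Every step is sound, and your closing remark correctly identifies the only point that needs care, namely that the bound on the overlap is uniform in $y$. The one gain of your version over the paper's is that the lemma becomes verifiable without consulting \cite{Liu2012}.
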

Now, from Corollary \ref{corolariooperadoralp} and Lemma \ref{lematrasladados3}, we obtain a partial extension of \cite[Proposition 5.1]{Freeman2014} to unconditional approximate Schauder frames.

\begin{proposition}\label{proposicioncompacto}
Let $1< p \le 2$, and let $\left\lbrace f_{i}, f_i' \right\rbrace_{i\in \mathbb{N}}$ be an unconditional approximate Schauder frame of translates for $X\subseteq L_p(\mathbb{R}^d)$ by a uniformly separated sequence.
Then, the restriction operator $R_D:X\rightarrow L_{p}(D)$ given by $R_D(g)=g\big|_D$ is compact for all bounded measurable $D$. Thus, $X\not=L_p(\mathbb{R}^d)$.
\end{proposition}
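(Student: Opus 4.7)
The natural strategy is to factor $R_D$ through a compact operator between sequence spaces. Two ingredients are already in place: Corollary~\ref{corolariooperadoralp} provides a bounded operator $\Psi_p:X\to\ell_p$ given by $\Psi_p(g)=(f_i'(g))_{i\in\mathbb{N}}$, and Lemma~\ref{lematrasladados3}, applied with the bounded measurable set $D$ and the uniformly separated translation sequence, yields $\sum_i\|f_i|_D\|_{L_p(D)}^p<\infty$. The hypothesis $1<p\le 2$ enters crucially here through $p\le p'$, which gives the continuous inclusion $\ell_p\hookrightarrow\ell_{p'}$; hence $\sum_i\|f_i|_D\|^{p'}<\infty$ as well.

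Define $T:\ell_p\to L_p(D)$ by $T((a_i))=\sum_i a_i\,f_i|_D$. Combining Minkowski's inequality in $L_p(D)$ with H\"older's inequality in the $\ell_p$--$\ell_{p'}$ duality gives, for every $N\in\mathbb{N}$ and every $(a_i)\in\ell_p$,
\[
\Big\|\sum_{i>N}a_i\,f_i|_D\Big\|_{L_p(D)}\le \sum_{i>N}|a_i|\,\|f_i|_D\|\le \|(a_i)\|_{\ell_p}\,\Big(\sum_{i>N}\|f_i|_D\|^{p'}\Big)^{1/p'}.
\]
Taking $N=0$ shows that $T$ is bounded, while for general $N$ the estimate proves that the finite-rank truncations $T_N((a_i))=\sum_{i\le N}a_i\,f_i|_D$ satisfy $\|T-T_N\|\to 0$. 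Thus $T$ is compact as a norm limit of finite-rank operators.

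Letting $S$ denote the (invertible) frame operator of $\{f_i,f_i'\}$, the continuity of restriction allows us to commute $R_D$ with the unconditionally convergent series, yielding $R_D(S(g))=\sum_i f_i'(g)\,f_i|_D=T(\Psi_p(g))$ for every $g\in X$. Therefore $R_D=T\circ\Psi_p\circ S^{-1}$, which is compact since $T$ is. For the concluding assertion, if $X=L_p(\mathbb{R}^d)$ and $D$ has positive finite measure, then $R_D$ would be a compact surjection onto the infinite-dimensional space $L_p(D)$ (extension by zero realizes the surjectivity); the Open Mapping Theorem would then force the image of the unit ball to contain a neighborhood of the origin and simultaneously be relatively compact, which is impossible in infinite dimensions.

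The main technical point is the passage from $\ell_p$-summability of $(\|f_i|_D\|)_i$ (given by Lemma~\ref{lematrasladados3}) to $\ell_{p'}$-summability, which is exactly what makes $T$ both bounded and compactly approximable by its truncations. This step hinges on $p\le p'$ and breaks down for $p>2$—consistent with Theorem~\ref{Teoremamarcosincondicionalescon2<p<+infinito}, which shows that in that range one cannot hope to exclude $X=L_p(\mathbb{R}^d)$.
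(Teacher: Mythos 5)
Your proof is correct and follows essentially the same route as the paper's: both rest on Corollary~\ref{corolariooperadoralp}, Lemma~\ref{lematrasladados3}, and the H\"older estimate combined with $p\le p'$ to show that the tail operators $g\mapsto\sum_{i>N}f_i'(g)f_i\big|_D$ tend to zero in norm, so that $R_D$ is a norm limit of finite-rank operators. The only difference is cosmetic: the paper passes first to the exact frame $\{f_i,(S^{-1})^*f_i'\}$ and shows its partial sums converge in operator norm to $R_D$, whereas you factor $R_D=T\circ\Psi_p\circ S^{-1}$ through a compact operator on $\ell_p$ --- the underlying estimate is identical.
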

\begin{proof}
Let $S$ be the frame operator of $\left\lbrace f_{i}, f_i' \right\rbrace_{i\in \mathbb{N}}$, and let $h_i'=\left(S^{-1}\right)^{*}f_i'$ for every $i\in\mathbb{N}$. By Lemma \ref{lemamarcoaproximado}, $\left\lbrace f_{i}, h_i' \right\rbrace_{i\in \mathbb{N}}$ is an unconditional Schauder frame for $X$. By Corollary \ref{corolariooperadoralp}, there is a bounded linear operator $\Psi_p:X\rightarrow \ell_p$ given by
\begin{align}
\Psi_p(g)=&\left(h_i'(g)\right)_{i\in\mathbb{N}}\;\;\;\forall g\in X. \nonumber
\end{align}
For each $n\in\mathbb{N}$, let
\begin{align}
T_n(g)=\sum\limits_{i=1}^{n}h_i'(g)f_i\big|_D. \label{compacto01}
\end{align}
Fix $g\in X$ and $n<m$. Since $p\le p'$, we have
\begin{align}
\left|\left|\left(T_n-T_m\right)(g)\right|\right|=&\left|\left|\sum\limits_{i=n+1}^{m}h_i'(g)f_i\big|_D\right|\right|\le \left(\sum\limits_{i=n+1}^{m}\left|h_i'(g)\right|^{p}\right)^{\frac{1}{p}}\left(\sum\limits_{i=n+1}^{m}\left|\left|f_i\big|_D\right|\right|^{p}\right)^{\frac{1}{p}}\nonumber\\
\le&\left|\left|\Psi_p\right|\right|||g||\left(\sum\limits_{i=n+1}^{\infty}\left|\left|f_i\big|_D\right|\right|^{p}\right)^{\frac{1}{p}}.\label{Cauchy}
\end{align}
It follows from (\ref{compacto01}), (\ref{Cauchy}) and Lemma \ref{lematrasladados3} that $\left\lbrace T_n\right\rbrace_{n\in\mathbb{N}}$ is a Cauchy sequence of finite rank operators. Thus, there is a compact operator $T$ such that $T_n\rightarrow T$ as $n\rightarrow +\infty$. Since $T_n(g)\rightarrow g\big|_D$ as $n\to +\infty$ for every $g\in X$, this completes the proof.
\end{proof}
From Corollary \ref{propositionmarcoincondicionalcud} and Proposition \ref{proposicioncompacto}, we obtain the negative answer of Problem~\ref{pregunta1} for $p=2$ (and a new way to obtain the answer for $1<p<2$).

\begin{corollary}\label{corolariopen12compacto}
Let $1 <p \le 2$. Suppose that $\left\lbrace f_{i}, f_i' \right\rbrace_{i\in \mathbb{N}}$ is an unconditional approximate Schauder frame of translates for $X\subseteq L_p(\mathbb{R}^d)$. If  $\{f_i'\}_{i\in\mathbb{N}}$ is seminormalized, the restriction operator $R_D:X\rightarrow L_{p}(D)$ given by $R_D(g)=g\big|_D$ is compact for all bounded measurable $D\subseteq \mathbb{R}^d$. Hence, $X\not=L_p(\mathbb{R}^d)$.
\end{corollary}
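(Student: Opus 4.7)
The strategy is to combine Corollary~\ref{propositionmarcoincondicionalcud} with Proposition~\ref{proposicioncompacto}. First, since $\{f_i, f_i'\}_{i\in\mathbb{N}}$ is an unconditional approximate Schauder frame of translates and $\{f_i'\}_{i\in\mathbb{N}}$ is seminormalized, Corollary~\ref{propositionmarcoincondicionalcud} yields immediately that the underlying translation sequence $\{\lambda_i\}_{i\in\mathbb{N}}$ is relatively uniformly separated.

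Next, I would invoke Proposition~\ref{proposicioncompacto} to conclude that $R_D$ is compact for every bounded measurable $D\subseteq\mathbb{R}^d$. The only point requiring attention is the mismatch between what I have (relative uniform separation) and what Proposition~\ref{proposicioncompacto} formally assumes (uniform separation). This gap is easy to bridge: inspection of the proof of Proposition~\ref{proposicioncompacto} shows that the separation hypothesis enters only through Corollary~\ref{corolariooperadoralp} and Lemma~\ref{lematrasladados3}, both of which are already stated and valid under the weaker relatively uniformly separated hypothesis. Thus the same argument applies verbatim: pass via Lemma~\ref{lemamarcoaproximado} to the Schauder frame $\{f_i,(S^{-1})^*f_i'\}$, use Corollary~\ref{corolariooperadoralp} to obtain the bounded operator $\Psi_p:X\to\ell_p$, and then apply H\"older's inequality (using $p\le p'$) together with the summability $\sum_i \|f_i|_D\|^p<+\infty$ from Lemma~\ref{lematrasladados3} to show that the partial sums $T_n(g)=\sum_{i=1}^n (S^{-1})^*f_i'(g)\,f_i|_D$ form a Cauchy sequence of finite-rank operators whose limit is $R_D$. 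Alternatively, one could partition $\mathbb{N}$ into finitely many uniformly separated blocks via Remark~\ref{lemmacaracterizarcuasiuniformementediscreto} and handle each block separately in the Cauchy estimate, which gives the same bound up to a factor of $k_0$.

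For the final assertion $X\ne L_p(\mathbb{R}^d)$, it suffices to recall that the restriction $L_p(\mathbb{R}^d)\to L_p(D)$ is itself not compact for any bounded $D$ of positive measure, as witnessed by appropriately normalized indicator functions of disjoint subsets of $D$. I do not anticipate a real obstacle in this proof: the substantive work has already been carried out in the earlier results, and the only subtlety---verifying that Proposition~\ref{proposicioncompacto} operates in the relatively uniformly separated setting---is essentially bookkeeping.
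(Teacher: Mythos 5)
Your proof is correct and follows essentially the same route as the paper, which derives the corollary precisely by combining Corollary~\ref{propositionmarcoincondicionalcud} with Proposition~\ref{proposicioncompacto}. Your bridging of the mismatch between the \emph{relatively} uniformly separated conclusion of Corollary~\ref{propositionmarcoincondicionalcud} and the \emph{uniformly} separated hypothesis stated in Proposition~\ref{proposicioncompacto} is accurate, since that proposition's proof indeed relies only on Corollary~\ref{corolariooperadoralp} and Lemma~\ref{lematrasladados3}, both valid in the relatively uniformly separated setting.
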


The compactness of the restriction operator in Proposition \ref{proposicioncompacto} gives us more information about the subspace $\overline{\left[f_i:i\in\mathbb{N}\right]}$ for $1<p<2$. In fact, $\overline{\left[f_i:i\in\mathbb{N}\right]}$ is isomorphic to a subspace of $\ell_p$. This follows from an immediate extension of \cite[Proposition 5.3]{Freeman2014} to $L_p(\mathbb{R}^d)$.
\begin{proposition}\label{proposicionembedding}
Let $X$ be a subspace of $L_p(\mathbb{R}^d)$. Suppose that for all cubes $Q\subseteq \mathbb{R}^d$ the restriction operator $R_Q:X\rightarrow L_p(Q)$ given by $R_Q(g)=g\big|_Q$, is compact. Then, $X$ is isomorphic to a subspace of $\ell_p$.
\end{proposition}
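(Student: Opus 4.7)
The plan is to embed $X$ isomorphically into $\ell_p$ by approximating, for each $g\in X$, the restriction $g|_{C_n}$ to a unit cube $C_n$ by a step function on a sufficiently fine dyadic sub-partition, and assembling the resulting finite-dimensional pieces. Fix a disjoint partition $\{C_n\}_{n\in\mathbb N}$ of $\mathbb R^d$ into unit cubes, giving the isometric identification
\[
g\in L_p(\mathbb R^d)\;\longleftrightarrow\;(g|_{C_n})_n\in\bigl(\bigoplus_n L_p(C_n)\bigr)_{\ell_p},
\]
and select positive numbers $\epsilon_n$ with $\bigl(\sum_n\epsilon_n^p\bigr)^{1/p}<\tfrac12$.

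For each $n$, let $\mathcal P_n$ be a dyadic sub-partition of $C_n$ (to be refined below) and let $\mathbb E_n\colon L_p(C_n)\to L_p(C_n)$ denote the conditional expectation onto the finite-dimensional space $\tilde E_n=\mathrm{span}\{\mathbf 1_A:A\in\mathcal P_n\}$. Then $\tilde E_n$, spanned by characteristic functions of disjoint sets of equal measure, is isometric to $\ell_p^{|\mathcal P_n|}$; moreover $\mathbb E_n$ is a contraction, and as the mesh of $\mathcal P_n$ shrinks we have $\mathbb E_n h\to h$ in $L_p(C_n)$ for every $h$ (by density of dyadic step functions in $L_p(C_n)$, together with the fact that $\mathbb E_n h=h$ once $\mathcal P_n$ refines the partition underlying $h$). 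The hypothesis enters here: since $R_{C_n}(B_X)$ is compact in $L_p(C_n)$ and $\{\mathbb E_n\}$ is uniformly bounded, the strong convergence $\mathbb E_n\to\mathrm{Id}$ is uniform on $R_{C_n}(B_X)$, so $\mathcal P_n$ may be refined to guarantee $\|(\mathbb E_n-\mathrm{Id})\circ R_{C_n}\|<\epsilon_n$.

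Now define $T\colon X\to\bigl(\bigoplus_n\tilde E_n\bigr)_{\ell_p}$ by $T(g)=(\mathbb E_n(g|_{C_n}))_n$. Since each $\tilde E_n$ is isometric to $\ell_p^{|\mathcal P_n|}$, the target is isometric to $\ell_p$. Boundedness of $T$ is immediate from $\|\mathbb E_n\|\le 1$, and the estimate
\[
\|T(g)-(g|_{C_n})_n\|^p=\sum_n\|(\mathbb E_n-\mathrm{Id})(g|_{C_n})\|^p_{L_p(C_n)}\le\bigl(\sum_n\epsilon_n^p\bigr)\|g\|^p<\tfrac{1}{2^p}\|g\|^p
\]
yields $\|T(g)\|\ge\|g\|_p-\tfrac12\|g\|_p=\tfrac12\|g\|_p$, so $T$ is bounded below. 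Thus $T$ is the desired isomorphic embedding of $X$ into $\ell_p$.

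The only delicate point is the uniform-on-compact-sets convergence of the contractions $\mathbb E_n$ to the identity, which is precisely where compactness of each $R_{C_n}$ is used; everything else is routine manipulation of $\ell_p$-sums and conditional expectations.
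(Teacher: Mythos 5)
Your argument is correct and is essentially the same as the one the paper relies on: the paper does not write out a proof but refers to \cite[Proposition 5.3]{Freeman2014}, whose argument is precisely this decomposition of $L_p(\mathbb{R}^d)$ as $\bigl(\bigoplus_n L_p(C_n)\bigr)_{\ell_p}$ over unit cubes, followed by finite-rank (conditional-expectation/step-function) approximation on each cube, with compactness of $R_{C_n}$ used to make the approximation uniform on $R_{C_n}(B_X)$ and a summable error $\sum_n \epsilon_n^p$ to get the lower bound. No gaps.
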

The proof is essentially the proof of \cite[Proposition 5.3]{Freeman2014}, with some straightforward modifications to extend it to $L_p(\mathbb{R}^d)$. From Propositions \ref{proposicioncompacto} and \ref{proposicionembedding}, we obtain the following.
\begin{corollary}\label{corolarioembedding}
Let $1< p <2$. If $\left\lbrace f_{i}, f_i' \right\rbrace_{i\in \mathbb{N}}$ is an unconditional approximate Schauder frame of translates for $X\subseteq L_p(\mathbb{R}^d)$ by a relatively uniformly separated sequence, then $X$ is isomorphic to a subspace of $\ell_p$.
\end{corollary}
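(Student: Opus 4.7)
The plan is to combine Proposition~\ref{proposicioncompacto} with Proposition~\ref{proposicionembedding}. First I would note that the proof of Proposition~\ref{proposicioncompacto}, although its statement only assumes a uniformly separated translating sequence, actually uses separation only via Lemma~\ref{lematrasladados3}, and the latter is stated and valid for any \emph{relatively} uniformly separated sequence. Consequently, the same construction as in Proposition~\ref{proposicioncompacto}, with the Cauchy sequence of finite rank operators $T_n(g)=\sum_{i=1}^n h_i'(g)\,f_i|_D$ (where $h_i'=(S^{-1})^{*}f_i'$), converges in operator norm to the restriction $R_D:X\to L_p(D)$; hence $R_D$ is compact for every bounded measurable $D\subseteq\mathbb{R}^d$. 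In particular, $R_Q:X\to L_p(Q)$ is compact for every cube $Q\subseteq\mathbb{R}^d$.

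Once this compactness is in hand, the second step is immediate: Proposition~\ref{proposicionembedding} applies verbatim and yields that $X$ is isomorphic to a subspace of $\ell_p$, which is exactly the conclusion of the corollary.

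The only subtle point, and essentially the only obstacle, is to justify the extension of Proposition~\ref{proposicioncompacto} from uniformly separated to relatively uniformly separated translating sequences. This does not require any new ideas: the H\"older estimate used in the proof needs only the summability $\sum_{i}\|f_i|_D\|^{p}<+\infty$ provided by Lemma~\ref{lematrasladados3}, which holds in our generality; the unconditional approximate frame structure, together with Corollary~\ref{corolariooperadoralp}, supplies the bounded operator $\Psi_p:X\to\ell_p$ needed for the other factor. Alternatively, one could invoke Remark~\ref{lemmacaracterizarcuasiuniformementediscreto} to partition $\mathbb{N}$ into finitely many index sets on which $\{\lambda_i\}$ is uniformly separated and sum the resulting compact pieces, but this detour adds nothing conceptually.
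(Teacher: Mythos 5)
Your proof is correct and follows exactly the paper's route: the corollary is obtained there, without further comment, by combining Proposition~\ref{proposicioncompacto} with Proposition~\ref{proposicionembedding}. Your additional observation that the proof of Proposition~\ref{proposicioncompacto} uses the separation hypothesis only through Lemma~\ref{lematrasladados3} and Corollary~\ref{corolariooperadoralp}, both of which are valid for relatively uniformly separated sequences, correctly resolves the mismatch between ``uniformly separated'' in that proposition and ``relatively uniformly separated'' in the corollary --- a point the paper passes over in silence.
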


\subsection{The case $p=1$.}\label{p=1}
It is known that there are no unconditional Schauder frames from $L_1(\mathbb{R}^d)$. This follows from the facts that every space with an unconditional Schauder frame is isomorphic to a complemented subspace of a space with an unconditional Schauder basis \cite[Theorem 3.6]{Casazza1999}, and $L_1(\mathbb{R}^d)$ is not isomorphic to a subspace of a space with an unconditional Schauder basis (see \cite{Pelczynski1961} or Proposition 1.d.1 in \cite{Lindenstrauss1977}). With regard to Schauder frames of translates, it was proven in \cite[Corollary 2.4]{Odell2011} that if $\left\lbrace T_{\lambda_i} f,f_i'\right\rbrace_{i\in\mathbb{N}}$ is a Schauder frame for a subspace $X\subseteq L_1(\mathbb{R})$ and $\left\lbrace \lambda_{i}\right\rbrace_{i\in\mathbb{N}}$ is uniformly separated, then $X$ is isomorphic to a subspace of $\ell_1$. Moreover, it follows from \cite[Proposition~2.1]{Odell2011} that $\left\lbrace T_{\lambda_i} f,f_i'\right\rbrace_{i\in\mathbb{N}}$ satisfies Property ($*$) defined in \cite[Page 6510]{Odell2011}, which in turn implies that the restriction operator $R_I:L_1(\mathbb{R})\rightarrow L_1(I)$ is compact for every bounded interval $I\subseteq \mathbb{R}$. In particular, this ensures that $X\not=L_1(\mathbb{R})$ (this last fact also follows from \cite[Theorem 1.7]{Odell2011} or \cite[Theorem 1]{Bruna2006}).
The described results can be extended in a straightforward manner to $L_1\left(\mathbb{R}^d\right)$ for every $d\in\mathbb{N}$ and to translates of finitely many functions by relatively uniformly separated sequences. Also, for unconditional (approximate) frames, we have the following result.
\begin{proposition}\label{propositionele1}
Let $\left\lbrace f_{i}, f_i' \right\rbrace_{i\in \mathbb{N}}$ be an unconditional approximate Schauder frame of translates for $X\subseteq L_1(\mathbb{R}^d)$ by a  relatively uniformly separated sequence. Then, the operator
\begin{align}
\Psi_1(g)=&\left(f_i'(g)\right)_{i\in\mathbb{N}} \;\;\;\text{for } g\in X \nonumber
\end{align}
is an isomorphism between $X$ and a complemented subspace of $\ell_1$. In particular, $X$ is isomorphic to $\ell_1$.
\end{proposition}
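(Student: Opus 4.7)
The plan is to mirror the argument of Proposition \ref{proposicionisomorfoaelep}, exploiting the fact that for $p=1$ the upper $\ell_1$ estimate on $\{f_i\}$ that was assumed as an extra hypothesis there is now free of charge from the translate structure, and the relative uniform separation of $\{\lambda_i\}$ is given directly.

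First I would write $\{f_i\}$ as a sequence of translates of finitely many nonzero functions $\{g_k\}_{1\le k\le k_0}$ by $\{\lambda_i\}_{i\in\mathbb{N}}$. Since $\|f_i\|=\|g_{k(i)}\|$ for the appropriate $k(i)$, the sequence $\{f_i\}$ is seminormalized. Setting $M_0=\max_{1\le k\le k_0}\|g_k\|$, the triangle inequality gives $\|\sum_{i=1}^{n}a_if_i\|\le M_0\sum_{i=1}^{n}|a_i|$ for every finite set of scalars, so the map $\Phi_1:\ell_1\to X$, $\Phi_1(\textbf{\emph{a}})=\sum_i a_i f_i$, is well-defined and bounded. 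On the other hand, the hypotheses of the proposition place us in the setting of Corollary \ref{corolariooperadoralp} with $p=1$, yielding a bounded linear operator $\Psi_1:X\to \ell_1$, $\Psi_1(g)=(f_i'(g))_{i\in\mathbb{N}}$.

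From here the argument runs verbatim as in Proposition \ref{proposicionisomorfoaelep}: if $S$ denotes the frame operator of $\{f_i,f_i'\}$, then $\Phi_1\circ \Psi_1 = S$ is an isomorphism of $X$, so setting $Z=\Psi_1(X)$, the map $S^{-1}\circ \Phi_1|_Z$ is a bounded left inverse of $\Psi_1$; hence $\Psi_1:X\to Z$ is an isomorphism. A direct computation (identical to the one in the proof of Proposition \ref{proposicionisomorfoaelep}) shows that $\Psi_1\circ S^{-1}\circ \Phi_1:\ell_1\to Z$ is a bounded projection onto $Z$, so $Z$ is complemented in $\ell_1$. To finish, I would invoke the classical theorem of Pe{\l}czy{\'n}ski (see \cite[Theorem 2.a.3]{Lindenstrauss1977}) to the effect that every infinite-dimensional complemented subspace of $\ell_1$ is isomorphic to $\ell_1$, and conclude $X\cong Z\cong \ell_1$.

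I do not expect any genuine obstacle. In the $1<p<\infty$ case of Proposition \ref{proposicionisomorfoaelep} the nontrivial steps were (a) obtaining the upper $\ell_p$ estimate and (b) producing relative uniform separation via Theorem \ref{teoremadeLiu}; both disappear here, (a) by the triangle inequality and seminormalization of $\{f_i\}$, (b) because it is already a hypothesis. The only conceptual point is to observe that these two bounded operators $\Phi_1$ and $\Psi_1$ are available, after which the proof is pure linear algebra around the frame operator.
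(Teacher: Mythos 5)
Your proposal is correct and follows essentially the same route as the paper: the paper likewise obtains $\Psi_1$ from Corollary \ref{corolariooperadoralp}, obtains $\Phi_1$ from the boundedness of $\{f_i\}$ (which, as you note, is automatic for translates of finitely many functions), and then concludes "by essentially the same argument given in the proof of Proposition \ref{proposicionisomorfoaelep}", i.e.\ the factorization $\Phi_1\circ\Psi_1=S$ and the projection $\Psi_1\circ S^{-1}\circ\Phi_1$. No gaps.
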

\begin{proof}
Corollary \ref{corolariooperadoralp} allows us to define $\Psi_1:X\rightarrow \ell_1$ as in the statement.
On the other hand, since $\left\lbrace f_i\right\rbrace_{i\in\mathbb{N}}$ is bounded, we can define a bounded linear operator $\Phi_1:\ell_1\rightarrow X$  by
\begin{align}
\Phi_1\left(\textbf{\emph{a}}\right)=&\sum\limits_{i=1}^{\infty}a_if_i \;\;\;\text{for } \textbf{\emph{a}}=\left(a_i\right)_{i\in\mathbb{N}} \in \ell_1.\nonumber
\end{align}
The proof is completed by essentially the same argument given in the proof Proposition~ \ref{proposicionisomorfoaelep}.
\end{proof}
Proposition \ref{propositionele1} can also be easily deduced from the fact that every sequence of translates by a relatively uniformly separated sequence in $L_1\left(\mathbb{R}^d\right)$ can be split into finitely many disjoint subsequences, each of them equivalent to the unit vector basis of $\ell_1$ (see  \cite[Remark 2.8 c)]{Odell2011}).

\subsection{Some related questions.}\label{open}
We end with a list of  questions that arise naturally in the context of (unconditional) Schauder frames of translates.
All the questions make sense for translates of a single function and for translates of finitely many functions are also interesting.

For $p=1$, as we mentioned, an immediate extension of \cite[Corollary 2.4]{Odell2011} shows that there is no Schauder frame $\left\lbrace f_i,f_i'\right\rbrace_{i\in\mathbb{N}}$ for $L_1(\mathbb{R}^d)$ where the $\{f_i\}_{i\in\mathbb{N}}$ are translates of finitely many functions by a relatively uniformly separated sequence in $\mathbb{R}^d$.
It is natural to ask about the existence of Schauder frames of translates with seminormalized coordinates for $L_1(\mathbb{R}^d)$, without any restrictions on the sequence by which a function is translated.
\begin{question}\label{preguntaabiertaL1}
Is there a Schauder frame of translates $\left\lbrace f_i,f_i'\right\rbrace_{i\in\mathbb{N}}$ for $L_1(\mathbb{R}^d)$ with seminormalized $\left\lbrace f_i'\right\rbrace_{i\in\mathbb{N}}$?
\end{question}
Note that if $\left\lbrace f_i,f_i'\right\rbrace_{i\in\mathbb{N}}$ is an approximate Schauder frame for a subspace $X\subseteq L_1(\mathbb{R}^d)$ containing a complemented copy of $\ell_1$ and $\left\lbrace f_i\right\rbrace_{i\in\mathbb{N}}$ is seminormalized, by Corollary \ref{corolarioseminormalizar} there is a seminormalized sequence $\left\lbrace F_i'\right\rbrace\subseteq X'$ such that $\left\lbrace f_i,F_i'\right\rbrace_{i\in\mathbb{N}}$ is a Schauder frame for $X$. Thus, Question \ref{preguntaabiertaL1} is equivalent to the following.
\begin{question}
Is there a Schauder frame of translates $\left\lbrace f_i,f_i'\right\rbrace_{i\in\mathbb{N}}$ for $L_1(\mathbb{R}^d)$?
\end{question}
For  $1<p\le 2$, Proposition \ref{proposicioncompacto} gives a negative answer to Problem \ref{pregunta1} from Section~\ref{sec-intro}. Moreover, it shows that there are no unconditional frames for $L_p(\mathbb{R})$ of the form $\left\lbrace T_{\lambda_{i}}f, f_i' \right\rbrace_{i\in \mathbb{N}}$, for any relatively uniformly separated sequence $\left\lbrace \lambda_{i}\right\rbrace_{i\in\mathbb{N}}$. Again, it is natural to ask what happens if we omit the restrictions on $\left\lbrace \lambda_{i}\right\rbrace_{i\in\mathbb{N}}$.
\begin{question}\label{preguntaabierta1<p<=2}
Let $1<p\le 2$.
\begin{enumerate}[label=\rm{(\alph*)}]
  \item \label{preguntaabierta1<p<=2a} Is there an unconditional Schauder frame of translates $\left\lbrace f_i,f_i'\right\rbrace_{i\in\mathbb{N}}$ for $L_p(\mathbb{R}^d)$?
  \item \label{preguntaabierta1<p<=2b}
Is there a Schauder frame of translates $\left\lbrace f_i,f_i'\right\rbrace_{i\in\mathbb{N}}$ for $L_p(\mathbb{R}^d)$ with seminormalized $\left\lbrace f_i'\right\rbrace_{i\in\mathbb{N}}$?\\

\end{enumerate}
\end{question}

In Theorem \ref{Teoremamarcosincondicionalescon2<p<+infinito}, we showed that for every unbounded sequence $\left\lbrace \lambda_i \right\rbrace_{i\in \mathbb{N}}\subseteq \mathbb{R}^d$, there exists an unconditional Schauder frame with seminormalized coordinates of the form $\left\lbrace T_{\lambda_{m_i}}f, F_i' \right\rbrace_{i\in \mathbb{N}}$, where $f$ is the function constructed in \cite[Theorem 3.2]{Freeman2014}, and $\left\lbrace \lambda_{m_i} \right\rbrace_{i\in \mathbb{N}}$ is a subsequence of $\left\lbrace \lambda_i \right\rbrace_{i\in \mathbb{N}}$. One could ask for necessary and sufficient conditions under which one could keep the original sequence, instead of a (proper) subsequence of it.
\begin{question}\label{preguntaabierta2<p<infinito}
Let $2<p<+\infty$, and let $\left\lbrace \lambda_{i}\right\rbrace_{i\in\mathbb{N}}\subseteq \mathbb{R}^d$ be a relatively uniformly separated sequence. What conditions on $\left\lbrace \lambda_{i}\right\rbrace_{i\in\mathbb{N}}\subseteq \mathbb{R}^d$ ensure the existence of a function $f\in\mathbb{R}^d$ and a seminormalized sequence $\left\lbrace F_i'\right\rbrace_{i\in\mathbb{N}}\subseteq L_{p'}(\mathbb{R}^d)$ such that $\left\lbrace T_{\lambda_{i}}f,F_i'\right\rbrace_{i\in\mathbb{N}}$ is an unconditional Schauder frame for $L_p(\mathbb{R}^d)$?
\end{question}
Finally, we mention that Theorem \ref{Teoremamarcosincondicionalescon2<p<+infinito} only gives a partial answer to Problem~\ref{pregunta1} for the case $2<p<+\infty$.

\subsection*{Ackowledgements} We want to thank Daniel Galicer for useful comments, particularly for the one that motivated our Proposition~\ref{proposicioncomobases2<p<+infinito}. We also want to thank the anonymous referee for carefully reading the manuscript and for helpful and detailed comments and suggestions.

\end{document}